\newcommand{\qed}{%
  \ifmmode 
   \eqno{\qedsymbol}
  \else
    \leavevmode\unskip\penalty9999 \hbox{}\nobreak\hfill\hbox{\qedsymbol}
  \fi
}
\newcommand{\qedsymbol}{\leavevmode\vrule height 1.2ex width 1.1ex depth -.1ex}
\newenvironment{proof}{\begin{trivlist}\item[\hskip%
\labelsep{\bf Proof.\quad}]}%
{\hfill\qed\rm\end{trivlist}}
\newtheorem{theorem}{Theorem}[section]
\newtheorem{corollary}[theorem]{Corollary}
\newtheorem{proposition}[theorem]{Proposition}
\newtheorem{lemma}[theorem]{Lemma}
\newtheorem{example}[theorem]{Example}
\newtheorem{remark}[theorem]{Remark}
\mathchardef\emptyset="001F
\def\n{\mathbb N}
\def\im{\hbox{\rm im}}
\def\X{{\mathcal X}}
\def\SF{\mathcal{SF}}
\def\CP{\mathcal{CP}}
\def\Pr{{\mathcal P}}
\def\F{{\mathcal F}}
\def\D{{\mathcal D}}
\def\Fr{{\mathcal Fr}}
\def\PWF{\mathcal{PWF}}
\def\T{\mathcal{T_F}}
\def\WT{\mathcal {WT_F}}
\def\I{\mathcal{I}}
\def\PWI{\mathcal{PWI}}
\begin{document}

\linespread{1}

\title{Covers of acts over monoids II}
\author{Alex Bailey and James Renshaw\\\small Department of Mathematics\\
\small University of Southampton\\
\small Southampton, SO17 1BJ\\
\small England\\
\small Email: alex.bailey@soton.ac.uk \\
\small j.h.renshaw@maths.soton.ac.uk}
\date{June 2012}
\maketitle

\begin{abstract}
In 1981 Edgar Enochs conjectured that every module has a flat cover and finally proved this in 2001. Since then a great deal of effort has been spent on studying different types of covers, for example injective and torsion free covers. In 2008, Mahmoudi and Renshaw initiated the study of flat covers of acts over monoids but their definition of cover was slightly different from that of Enochs. Recently, Bailey and Renshaw produced some preliminary results on the `other' type of cover and it is this work that is extended in this paper. We consider free, divisible, torsion free and injective covers and demonstrate that in some cases the results are quite different from the module case.
\end{abstract}

{\bf Key Words} Semigroups, monoids, acts, torsion free, divisible, injective, covers, precovers

{\bf 2010 AMS Mathematics Subject Classification} 20M50.

\section{Introduction and Preliminaries}

Let $S$ be a monoid. Throughout, unless otherwise stated, all acts will be right $S-$acts and all congruences right $S-$congruences. We refer the reader to~\cite{howie-95} for basic results and terminology in semigroups and monoids and to  \cite{ahsan-08} and \cite{kilp-00} for those concerning acts over monoids. The aim of this paper is to extend the ideas introduced in~\cite{bailey-12} and in particular consider the problem of which monoids $S$ have the property that all their acts have divisible, torsion free or injective covers. These covers have been studied in detail for modules over a unitary ring $R$ but the situation for injective covers in particular is very different in the monoid case. We also consider the CRM monoids introduced by Feller and Gantos in~\cite{feller-69} and generalise the construction of the semigroup of quotients to acts. We provide an application of this to consider covers of {\em weakly torsion free} acts.

After some preliminary results and definitions in Section 1 and a brief section on free covers in Section 2, we give a necessary and sufficient condition for acts to have covers with the unique mapping property in Section 3.
Divisible covers are considered in Section 4 where we show that if an act has a divisible cover then it is precisely its largest divisible subact and show that not every act has a divisible cover. In the early sixties, Enochs proved that all modules over integral domains have torsion free covers and we provide a similar result in Section 5 using right cancellative monoids. In~\cite{feller-69}, Feller and Gantos gave a weaker condition than torsion free and provided connections with what they referred to as CRM monoids. We consider these concepts in Section 6 where we show how to construct the classical semigroup of right quotients using a more general construction for acts. Enochs proved that all $R-$modules have an injective cover if and only if $R$ is Noetherian. The situation for monoids is quite different. If $R$ is a ring and $f:X\to Y$ is a split $R-$monomorphism then it is well known that $\im(f)$ is a direct summand of $Y$. From this we can deduce that an indecomposable injective $R-$module is the injective hull of all of its submodules. This is not so in the category of $S-$acts. We do however provide some necessary conditions for the existence of injective covers of $S-$acts. For principally weakly injective covers the situation is slightly easier and we provide a sufficient condition concerning these covers in Section 8.

Let $S$ be a monoid, and $A$ be an $S-$act. Unless otherwise stated, in the rest of this section, $\X$ will be a class of $S-$acts closed under isomorphisms. By an $\X$-{\em precover} of $A$ we mean an $S-$map $g: P\to A$ for some $P\in \X$ such that
for every $S-$map $g':P'\to A$, for $P'\in \X$, there exists an $S-$map $f:P'\to P$ with $g'=gf$.
$$
\begin{tikzpicture}[description/.style={fill=white,inner sep=2pt}]
\matrix (m) [matrix of math nodes, row sep=3em,
column sep=2.5em, text height=1.5ex, text depth=0.25ex]
{P & A\\ 
&P'\\};
\path[->,font=\scriptsize]
(m-2-2) edge node[auto,below left] {$f$} (m-1-1)
(m-2-2) edge node[auto, right] {$g'$} (m-1-2)
(m-1-1) edge[->] node[auto,above] {$g$} (m-1-2);
\end{tikzpicture}
$$

If in addition the precover satisfies the condition that each $S-$map $f:P\to P$ with $gf=g$ is an isomorphism, then we shall call it an {\em $\X-$cover}. We shall frequently identify the (pre)cover with its domain. We refer the reader to~\cite{bailey-12} for more detailed information surrounding covers and precovers and the connection with colimits and directed colimits. We include however a few necessary results from that paper here for completeness.

\begin{theorem}[{\cite[Theorem 4.11]{bailey-12}}]\label{precover-cover-theorem}
Let $S$ be a monoid, let $A$ be an $S-$act and let $\X$ be a class of $S-$acts closed under directed colimits. If $A$ has an $\X-$precover then $A$ has an $\X-$cover.
\end{theorem}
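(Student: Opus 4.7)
The plan is to construct the desired $\X$-cover by distilling it out of the given precover $g : P \to A$ via a suitable directed colimit inside $\X$. The guiding intuition is that any endomorphism $f : P \to P$ with $gf = g$ that fails to be an isomorphism represents a kind of redundancy in $P$, and passing to a well-chosen colimit should wash out all such redundancies while staying inside $\X$ (thanks to the closure hypothesis).

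Concretely, I would consider the collection $\mathcal S$ of all $\X$-precovers $h : Q \to A$ equipped with an $S$-map $\phi_Q : Q \to P$ satisfying $g \phi_Q = h$, and organize (a cofinal part of) $\mathcal S$ as a directed poset under the obvious factorization relation. By the hypothesis on $\X$, the directed colimit $\overline{P}$ of this system again lies in $\X$, and the maps $h$ assemble into a single $\overline{g} : \overline{P} \to A$. That $\overline{g}$ is still an $\X$-precover is the easy part: given any $g' : P' \to A$ with $P' \in \X$, the precover property of $g$ yields a factorization $g' = g f$ for some $f : P' \to P$, and composing $f$ with the canonical coprojection $P \to \overline{P}$ produces the required factorization through $\overline{g}$.

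The main step, and what I expect to be the principal obstacle, is showing the cover condition: that any $\phi : \overline{P} \to \overline{P}$ with $\overline{g}\phi = \overline{g}$ is an isomorphism. Here one uses that $\phi$ is determined by its restrictions to the stages of the directed system, which are endomorphisms of the corresponding $Q$'s compatible with the transition maps; morally, every non-invertible such endomorphism becomes invertible after enough transitions are applied. A clean way to pin this down is a minimality argument via Zorn's lemma applied to $\mathcal S$ (or a cardinality-bounded subfamily), choosing a minimal element $\overline{g} : \overline{P} \to A$ and then arguing that if some $\phi$ violated the cover condition, one could pass to its image or an iterated colimit to obtain a strictly smaller element of $\mathcal S$, contradicting minimality.

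A subtlety to handle along the way is set-theoretic: a priori $\mathcal S$ is a proper class, so one needs to bound the cardinality of the relevant $Q$'s (say, in terms of $|P|$, $|A|$ and $|S|$) to ensure that Zorn's lemma, the directed colimit, and the minimality extraction are all legitimate. Once that bound is in place, the existence of the cover follows by combining the precover verification above with the minimality-forced cover condition.
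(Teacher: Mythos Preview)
The paper does not prove this theorem; it is quoted from \cite[Theorem~4.11]{bailey-12} without argument, so there is no in-paper proof to compare against. I can only assess your proposal on its own merits.

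The ingredients you list---directed colimits inside $\X$, an extremality step, and a cardinality bound---are indeed the ones that drive the standard argument, but the way you assemble them has genuine gaps. First, your collection $\mathcal S$ is not obviously directed. You take precovers $h:Q\to A$ factoring through $g$ via some $\phi_Q:Q\to P$; but since $g$ is itself a precover, \emph{every} $\X$-map into $A$ factors through $g$, so $\mathcal S$ is essentially all $\X$-precovers of $A$. Under whichever ``obvious factorization relation'' you intend, there is no reason two arbitrary precovers admit a common bound. Directedness has to be manufactured, typically by building a well-ordered chain by transfinite recursion rather than by finding one lying around.

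Second, the crucial step---why the colimit object satisfies the cover condition---is only asserted (``morally, every non-invertible such endomorphism becomes invertible after enough transitions'') and not argued. The mechanism that actually makes this work is: given a non-isomorphism $f:P\to P$ with $gf=g$, the sequential colimit of $P\xrightarrow{f}P\xrightarrow{f}\cdots$ lies in $\X$, still carries a precover map to $A$, and in that colimit the shift induced by $f$ \emph{is} an automorphism. One then iterates this transfinitely, and the cardinality bound forces stabilisation. Your sketch never isolates this inversion-by-colimit step, and without it the argument does not close. Finally, the Zorn/minimality formulation is backwards: Zorn produces maximal elements, and to extract a minimal one you would need lower bounds for chains, which you have not supplied; in practice the proof is organised as a transfinite construction that halts by cardinality, not as a direct Zorn application.
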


Let $S$ be a monoid and let $\X$ be a class of $S-$acts. We say that $\X$ satisfies the {\em (weak) solution set condition} if for all $S-$acts $A$ there exists a set $S_A\subseteq\X$ such that for all (indecomposable) $X\in\X$ and all $S-$maps $h:X\to A$ there exists $Y\in S_A$, $f:X\to Y$ and $g:Y\to A$ such that $h=gf$.

\begin{theorem}[{\cite[Theorem 4.13]{bailey-12}}]\label{solution-set-theorem}
Let $S$ be a monoid and let $\X$ be a class of $S-$acts such that
$X_i \in \X$ for each $i \in I\Rightarrow\dot\bigcup_{i \in I}X_i \in \X$.
Then every $S-$act has an $\X-$precover if and only if
\begin{enumerate}
\item for every $S-$act $A$ there exists an $X$ in $\X$ such that $\text{Hom}_S(X,A)\ne\emptyset$;
\item $\X$ satisfies the solution set condition;
\end{enumerate}
\end{theorem}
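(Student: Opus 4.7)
The plan is to prove the two directions separately; the forward direction is essentially a tautology, while the backward direction requires an explicit construction of the precover by taking a large disjoint union.

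For the forward direction, assume every $S$-act has an $\X$-precover. Given an $S$-act $A$, let $g : P \to A$ be its $\X$-precover. Then $P \in \X$ and $g \in \text{Hom}_S(P, A)$, which immediately establishes condition (1). For condition (2), I would simply set $S_A = \{P\}$: given any $X \in \X$ and any $S$-map $h : X \to A$, the precover property produces $f : X \to P$ with $h = gf$, so the solution set condition holds with $Y = P$ and $g : P \to A$ itself playing the role of the second factor.

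For the backward direction, given an $S$-act $A$, use condition (2) to obtain the solution set $S_A \subseteq \X$. First invoke condition (1) to ensure there is some $X \in \X$ and some $h : X \to A$; then applying (2) to this $h$ yields at least one $Y \in S_A$ with $\text{Hom}_S(Y, A) \neq \emptyset$, so the construction below is non-vacuous. For each pair $(Y, h)$ with $Y \in S_A$ and $h \in \text{Hom}_S(Y, A)$, take an indexed copy $Y_{(Y,h)}$ of $Y$ and form the disjoint union
$$P = \dot\bigcup_{Y \in S_A,\ h \in \text{Hom}_S(Y, A)} Y_{(Y,h)}.$$
By the closure hypothesis, $P \in \X$. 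Define $g : P \to A$ on each summand $Y_{(Y,h)}$ by $g|_{Y_{(Y,h)}} = h$. To verify the precover property, let $g' : X \to A$ be any $S$-map with $X \in \X$. By condition (2), there exist $Y \in S_A$, $f : X \to Y$, and $h : Y \to A$ such that $g' = hf$. Composing $f$ with the canonical inclusion $\iota_{(Y,h)} : Y \hookrightarrow P$ gives an $S$-map $\iota_{(Y,h)} f : X \to P$ satisfying $g \circ \iota_{(Y,h)} f = hf = g'$, as required.

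The main obstacle is the set-theoretic justification that $P$ is a well-defined $S$-act: this requires both that $S_A$ is a (small) set, which is guaranteed by the solution set condition, and that $\text{Hom}_S(Y, A)$ is a set for each $Y \in S_A$, which is automatic since $Y$ and $A$ are sets. The closure of $\X$ under arbitrary disjoint unions is then exactly what allows $P$ to lie in $\X$. The role of condition (1) is subtle but necessary: without it, every $Y \in S_A$ might have $\text{Hom}_S(Y,A) = \emptyset$, in which case the above disjoint union would be empty and could fail to produce a legitimate object in $\X$ (or would produce one from which no map to $A$ exists at all).
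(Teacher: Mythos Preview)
The paper does not actually supply a proof of this theorem; it is quoted as a preliminary result from the companion paper \cite[Theorem 4.13]{bailey-12} and used as a black box thereafter. So there is no in-paper argument to compare against.

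Your proof is correct and is essentially the canonical one. The forward direction is immediate with $S_A=\{P\}$ as you say. For the backward direction, the construction of $P$ as the coproduct indexed by all pairs $(Y,h)$ with $Y\in S_A$ and $h\in\text{Hom}_S(Y,A)$ is exactly the standard move, and you have correctly identified the two places where care is needed: that the index set is a genuine set (guaranteed since $S_A$ is a set and each $\text{Hom}_S(Y,A)$ is a set), and that the index set is nonempty (which is precisely where condition (1) enters, via an application of (2) to some $h:X\to A$ coming from (1)). The closure of $\X$ under coproducts then puts $P$ in $\X$, and the verification of the precover property is routine. This is almost certainly the argument appearing in the original reference as well.
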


\begin{proposition}[{\cite[Corollary 4.14]{bailey-12}}]\label{lambda-skeleton-proposition}
Let $S$ be a monoid and let $\X$ be a class of $S-$acts such that
\begin{enumerate}
\item $\dot\bigcup_{i \in I}X_i \in \X \Leftrightarrow X_i \in \X$ for each $i \in I$;
\item for every $S-$act $A$ there exists an $X$ in $\X$ such that $\text{Hom}_S(X,A)\ne\emptyset$;
\item there exists a cardinal $\lambda$ such that for every indecomposable $X$ in $\X$, $|X|<\lambda$.
\end{enumerate}
Then every $S-$act has an $\X-$precover.
\end{proposition}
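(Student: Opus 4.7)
The plan is to reduce to Theorem~\ref{solution-set-theorem}. Condition (i) of that theorem is exactly hypothesis~(2) here, and hypothesis~(1) gives (in particular) the closure under disjoint unions required by Theorem~\ref{solution-set-theorem}. So the whole task is to verify the solution set condition: for each $S$-act $A$, exhibit a set $S_A\subseteq\X$ through whose members every $S$-map from an $\X$-act into $A$ factors.

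First I would use hypothesis~(3) to argue that, up to isomorphism, the indecomposable members of $\X$ form a set. Indeed, an $S$-act structure on an underlying set of cardinality less than $\lambda$ is specified by a subset of a set of bounded size, so the isomorphism classes of such acts form a set; pick a set $\I$ of representatives. Now, given an $S$-act $A$, for each $I\in\I$ and each $\phi\in\mathrm{Hom}_S(I,A)$ take a fresh copy $I_\phi$ of $I$, and define
$$Y_A=\dot\bigcup_{I\in\I}\,\dot\bigcup_{\phi\in\mathrm{Hom}_S(I,A)}I_\phi.$$
By the ``$\Leftarrow$'' direction of hypothesis~(1), $Y_A\in\X$. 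The evaluation maps $\phi\colon I_\phi\to A$ assemble into an $S$-map $g_A\colon Y_A\to A$. I would then set $S_A=\{Y_A\}$.

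To see this works, take any $X\in\X$ and any $S$-map $h\colon X\to A$. Decompose $X$ into its (unique) indecomposable components $X=\dot\bigcup_{j\in J}X_j$; by the ``$\Rightarrow$'' direction of hypothesis~(1), each $X_j\in\X$, and by construction of $\I$ there is some $I_j\in\I$ together with an isomorphism $\alpha_j\colon X_j\to I_j$. Put $\phi_j=h|_{X_j}\circ\alpha_j^{-1}\in\mathrm{Hom}_S(I_j,A)$, so that $I_j$ has a distinguished copy $(I_j)_{\phi_j}$ sitting inside $Y_A$. Defining $f\colon X\to Y_A$ component-wise by $f|_{X_j}=\iota_{(I_j)_{\phi_j}}\circ\alpha_j$ gives an $S$-map with $g_A\circ f=h$. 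Hence the solution set condition holds with $S_A=\{Y_A\}$, and Theorem~\ref{solution-set-theorem} delivers an $\X$-precover of $A$.

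The only delicate points are set-theoretic: Step~1 needs the bound $|X|<\lambda$ to convert the proper class of indecomposable $\X$-acts into a genuine set $\I$, and Step~2 needs the bi-implication in hypothesis~(1) (not just one direction) so that the disjoint union used to build $Y_A$ lies in $\X$ while every $X\in\X$ has each indecomposable component again in $\X$. Everything else is bookkeeping with the coproduct universal property.
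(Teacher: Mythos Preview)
Your argument is correct. In this paper the proposition is only quoted (as \cite[Corollary~4.14]{bailey-12}) without proof, so there is no in-text argument to compare against; however, the labelling ``Corollary~4.14'' to ``Theorem~4.13'' in \cite{bailey-12} indicates that the intended derivation is exactly the one you give: deduce the solution set condition from the cardinality bound and then invoke Theorem~\ref{solution-set-theorem}. Your bookkeeping---building the single object $Y_A$ and factoring componentwise via the unique decomposition of $X$ into indecomposables (each in $\X$ by the ``$\Rightarrow$'' direction of hypothesis~(1))---is sound; the only standing assumption you use implicitly is that $\X$ is closed under isomorphisms, which the paper stipulates at the outset.
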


\medskip

Recall that an $S-$act $F$ is called {\em free} (with basis $A$) if there exists a set $A$ such that $X=A\times S$ with multiplication given by $(a,s)t=(a,st)$.
An $S-$act $P$ is called {\em projective} if given any $S-$epimorphism $f:A \rightarrow B$, whenever there is an $S-$map $g:P \rightarrow B$ there exists an $S-$map $h:P \rightarrow A$ such that $hf=g$.
An $S-$act $A$ is called {\em torsion free} if for any $x,y \in A$, and for any right cancellative element $c \in S$, $xc=yc$ implies $x=y$.
An $S-$act $A$ is called {\em divisible} if for any $a \in A$, and for any left cancellative $c \in S$, there exists $d \in A$ such that $a=dc$.
An $S$-act $Q$ is {\em injective} if for any monomorphism $\iota:A \to B$ and any homomorphism $f:A \rightarrow Q$ there exists a homomorphism $\bar{f}:B \rightarrow Q$ such that $f=\bar{f}\iota$ and it is {\em principally weakly injective} if it is injective with respect to all inclusion of principal ideals of $S$.

\smallskip

Throughout this paper we shall denote the class of all free $S$-acts by $\Fr$, the class of all projective $S-$acts by $\Pr$, the class of all torsion free $S-$acts by $\T$, the class of all divisible $S-$acts by $\D$, the class of all injective $S-$acts by $\I$ and the class of all principally weakly injective $S-$acts by $\PWI$. It is well known that $\Fr\subseteq\Pr\subseteq\T$ and $\I\subseteq\D$.

\section{Free covers}

It is well known that not every $S-$act has a $\Pr-$cover. In fact monoids over which every right $S-$act has a $\Pr-$cover are called {\em right perfect monoids} (see~\cite{isbell-71} and~\cite{fountain-76}). It was shown however in \cite[Proposition 5.8]{bailey-12} that every right $S$-act has a $\Pr-$precover. We prove similar results here for $\Fr-$(pre)covers.

\smallskip

Let $S$ be a monoid and $f : C \to A$ be an S-epimorphism. We
call $f$ {\em coessential} if for each $S-$act $B$ and each $S-$map $g : B \to C$, if $fg$ is an epimorphism then $g$ is an epimorphism. It is fairly clear that $f:C\to A$ is a coessential epimorphism if and only if there is no proper $S-$subact $B$ of $C$ such that $f|_B$ is onto.

\begin{lemma}
Let $S$ be a monoid and let $A$ be an $S-$act. Then $f:C\to A$ is a $\Fr-$cover of $A$ if and only if $f$ is a coessential epimorphism with $C\in\Fr$.
\end{lemma}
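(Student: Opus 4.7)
The plan is to prove both implications.

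For the forward direction, suppose $f:C\to A$ is a $\Fr$-cover; then $C\in\Fr$ by definition, and I need $f$ to be a coessential epi. To see $f$ is surjective, I would apply the precover property to the canonical free surjection $\epsilon:A\times S\to A$, $(a,s)\mapsto as$: this yields $k:A\times S\to C$ with $fk=\epsilon$, forcing $f$ onto. For coessentiality, take any subact $B\subseteq C$ with $f|_B$ surjective, write $C=X\times S$ with basis $X$, and for each $x\in X$ pick $b_x\in B$ with $f(b_x)=f(x,1)$. Extending $x\mapsto b_x$ to an $S$-map $h:C\to C$ gives $fh=f$ and $\im h\subseteq B$, so the cover condition forces $h$ to be an isomorphism and hence $C=\im h\subseteq B$.

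For the converse, suppose $f:C\to A$ is a coessential epi with $C\in\Fr$. Every $P'\in\Fr$ is projective, so any $S$-map $g':P'\to A$ lifts through the epi $f$; thus $f$ is an $\Fr$-precover. It remains to verify the cover condition. Let $h:C\to C$ satisfy $fh=f$. Then $f|_{\im h}$ is surjective because $f(h(C))=f(C)=A$, so coessentiality gives $\im h=C$, i.e.\ $h$ is onto.

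The main obstacle is to deduce that such a surjective $h$ must also be injective, since free $S$-acts need not be Hopfian in general, and coessentiality alone does not seem to suffice. The trick is to invoke projectivity of $C$ twice: since $h$ is surjective and $C$ is projective, $h$ splits, i.e.\ $hh'=\text{id}_C$ for some $h':C\to C$. Then $fh'=fhh'=f$, and the same coessentiality argument applied to $h'$ shows $h'$ is surjective. But $hh'=\text{id}_C$ already forces $h'$ injective, hence bijective, and therefore $h=(h')^{-1}$ is bijective as well, completing the cover condition.
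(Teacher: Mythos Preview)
Your proposal is correct and follows essentially the same route as the paper's proof. The only cosmetic differences are that you spell out surjectivity of the cover directly (the paper cites it), and in the coessentiality step you build the map $h$ from the free basis rather than invoking projectivity of $C$; in the converse your splitting-plus-coessentiality argument for injectivity of $h$ is exactly the paper's argument with the roles of the two maps matched.
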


\begin{proof}
Suppose that $g$ is a $\Fr-$cover of $S$. Then by ~\cite[Proposition 4.3]{bailey-12} $g$ is an epimorphism. Let $B$ be a subact of $C$
such that $g|_B$ is an epimorphism. Then since $C$ is projective, there exists an $S-$map $h : C \to B$ with $(g|_B)h = g$. Then we get easily that $g = g\iota h$, where $\iota: B\to C$ is the inclusion map. Now, by hypothesis, $\iota h$ must be an isomorphism which gives $B = C$.

Conversely let $g : C\to A$ be a coessential epimorphism and suppose that $C\in\Fr$. Then $g$ is a $\Fr-$precover since every free $S-$act is projective. To prove that it is a $\Fr-$cover, let $f : C\to C$ be an $S-$map
with $g = gf$. Then, $g|_{\im(f)}$ is onto, and so $\im(f) = C$. Thus $f$ is an epimorphism, and since $C$ is projective, there exists an $S-$map $h : C\to C$ such that $fh = 1_C$. So $h$ is a monomorphism and $gh = (gf)h = g(fh) = g$. Thus, $g|_{\im(h)}$ is onto,
and hence $\im(h) = C$. Therefore, $h$ is an epimorphism and so an isomorphism. Hence $f$ is an isomorphism.
\end{proof}

\begin{lemma}\label{free-precover-lemma}
Let $S$ be a monoid. Then every $S-$act has a $\Fr-$precover.
\end{lemma}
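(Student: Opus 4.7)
The plan is to apply Proposition \ref{lambda-skeleton-proposition} with $\X=\Fr$, which reduces the lemma to verifying its three hypotheses.

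For condition (1), the backward direction is immediate: a disjoint union of free acts $B_i\times S$ is free on the disjoint union of their bases. For the forward direction, note that any free act $B\times S$ decomposes as $\dot\bigcup_{b\in B}(\{b\}\times S)$ into cyclic, hence indecomposable, components. Since every $S$-act has a unique decomposition into indecomposable subacts, any expression of $B\times S$ as $\dot\bigcup_i X_i$ forces each $X_i$ to be a disjoint union of some of the cyclic summands $\{b\}\times S\cong S$, and so $X_i$ is itself free.

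Condition (2) is trivial: for any nonempty $S$-act $A$, pick $a\in A$ and use the homomorphism $S\to A$, $s\mapsto as$, from the free $S$-act on one generator.

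For condition (3), the indecomposable free acts are exactly the cyclic free acts, each isomorphic to $S$, so taking $\lambda=|S|^+$ (or any cardinal strictly larger than $|S|$) gives the required uniform bound. Proposition \ref{lambda-skeleton-proposition} then yields that every $S$-act has an $\Fr$-precover. I expect no substantial obstacle; the only step requiring more than a one-line observation is the forward direction of (1), which hinges on the uniqueness of the decomposition of an $S$-act into indecomposables.
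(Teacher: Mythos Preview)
Your proof is correct but takes a genuinely different route from the paper's. The paper gives a one-line direct construction: for any $S$-act $A$, the canonical $S$-epimorphism $g:A\times S\to A$, $(a,s)\mapsto as$, is a $\Fr$-precover because every free act is projective and hence any $S$-map $P'\to A$ with $P'$ free lifts along the epimorphism $g$. Your argument instead invokes the abstract existence criterion of Proposition~\ref{lambda-skeleton-proposition}, verifying that $\Fr$ is closed under and reflected by coproducts, that $\hbox{Hom}(S,A)\ne\emptyset$, and that indecomposable free acts are all copies of $S$. The paper's approach is shorter, more elementary, and explicitly identifies the precover (which matters immediately afterwards, where it is observed that these $\Fr$-precovers are simultaneously $\Pr$-precovers); your approach, while heavier, illustrates how the general machinery of Proposition~\ref{lambda-skeleton-proposition} applies and would transfer to situations where no explicit epimorphic precover is to hand.
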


\begin{proof}
Let $A$ be an $S-$act. Take $A \times S$ the free $S-$act generated by $A$ with the $S-$map $g:A \times S \rightarrow A$, $(a,s) \mapsto as$. Then $g$ is an $S-$epimorphism and so every free $S-$act (which is also projective) factors through it. 
\end{proof}

Notice that these $\Fr-$precovers are also $\Pr-$precovers.

\begin{theorem}
Given any monoid $S$, the following are equivalent:
\begin{enumerate}
\item Every $S-$act has an $\Fr$-cover.
\item The one element $S-$act $\Theta$ has an $\Fr$-cover.
\item $S$ is a group.
\end{enumerate}
\end{theorem}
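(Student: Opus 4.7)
The plan is to establish the cycle $(1) \Rightarrow (2) \Rightarrow (3) \Rightarrow (1)$, relying throughout on the characterization of $\Fr$-covers as coessential epimorphisms from free acts given in the first lemma of this section. The step $(1)\Rightarrow(2)$ is immediate.

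For $(2)\Rightarrow(3)$, the main content of the theorem, I would begin with an $\Fr$-cover $g:A\times S\to\Theta$ and apply coessentiality twice. First, because $\Theta$ is a single point, the restriction of $g$ to the subact $\{a\}\times S$ is already onto for each $a\in A$, so coessentiality forces $\{a\}\times S=A\times S$, i.e.\ $|A|=1$; the cover thus collapses to the canonical map $S\to\Theta$. Second, coessentiality of this map says that every nonempty subact of $S$ (as right act over itself) equals $S$; applying this to the principal subact $sS$ yields $sS=S$ for every $s\in S$, so every element has a right inverse. A one-line calculation (if $st=1$ and $tu=1$, then $s=s(tu)=(st)u=u$, whence $ts=tu=1$) promotes right inverses to two-sided inverses, so $S$ is a group.

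For $(3)\Rightarrow(1)$, let $S$ be a group and $A$ any $S$-act. Partitioning $A$ into its orbits $A=\dot\bigcup_{i\in I}a_iS$ and assembling the maps $S\to a_iS$, $s\mapsto a_is$, I obtain an epimorphism $g:\dot\bigcup_{i\in I}S\to A$ from a free act. Because $S$ is a group, the only nonempty subact of $S$ is $S$ itself, so any subact of the coproduct on which $g$ is onto must contain the $i$-th summand entirely for every $i\in I$; that is coessentiality, and the characterization lemma then gives that $g$ is an $\Fr$-cover.

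The main obstacle is the step $(2)\Rightarrow(3)$: the task is to extract structural information about the whole monoid $S$ from the single hypothesis of a coessential free epimorphism onto the trivial act. The argument hinges on using coessentiality both to eliminate redundant basis elements (forcing $|A|=1$) and then to describe the principal subacts of $S$, after which the monoid-theoretic passage from right inverses to a group is a routine three-line calculation.
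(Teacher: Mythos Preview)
Your proof is correct, and your handling of $(2)\Rightarrow(3)$ is essentially the coessentiality translation of what the paper does with the cover definition: where the paper defines explicit endomorphisms $f(x,s)=(a,s)$ and $h(s)=xs$ and invokes the automorphism clause of the cover definition, you instead pass through the characterization lemma and restrict to the subacts $\{a\}\times S$ and $sS$. The two arguments are mirror images of each other via that lemma, so there is no real divergence here (and your explicit promotion of right inverses to two-sided inverses fills in a step the paper leaves implicit).

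The genuine difference is in $(3)\Rightarrow(1)$. The paper does not construct the cover directly; instead it invokes the Knauer--Petrich classification (over a group, strongly flat $=$ free), combines it with closure of strongly flat acts under directed colimits to conclude that $\Fr$ is closed under directed colimits, and then appeals to Lemma~\ref{free-precover-lemma} and Theorem~\ref{precover-cover-theorem} to upgrade the universal free precover to a cover. Your argument is strictly more elementary: you exhibit the orbit decomposition $A=\dot\bigcup a_iS$, write down the obvious epimorphism from $\dot\bigcup_I S$, and verify coessentiality by hand using that over a group the only nonempty subact of $S$ is $S$ itself (so any subact of the coproduct is a sub-coproduct, and surjectivity onto the disjoint orbits forces every summand to appear). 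Your route avoids the external classification theorem and the directed-colimit machinery entirely; the paper's route, by contrast, illustrates how the general precover-to-cover mechanism applies and connects the result to the homological classification of monoids. Both are sound; yours is shorter and self-contained, the paper's is more structural.
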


\begin{proof}
$(1) \Rightarrow (2)$ is a tautology. \\
$(2) \Rightarrow (3)$ If $g:C\to\Theta$ is a $\Fr-$cover for $\Theta$ then $C=A\times S$ for some set $A$. Let $a\in A$ and define $f:C\to C$ by $f(x,s)=(a,s)$ for $x\in A$. Then $gf=g$ and so $f$ is an automorphism. Hence $|A|=1$ and so $C\cong S$. Now let $x\in S$ and consider $h:S\to S$ given by $h(s)=xs$. Then $gh=g$ and so $h$ is an automorphism and hence $S=xS$ for all $x\in S$. Hence $S$ is a group. \\
$(3) \Rightarrow (1)$ By the homological classification of monoids, $S$ is a group if and only if every strong flat $S-$act is free \cite[Theorem 2.6]{knauer-81}. In particular, since the strongly flat $S-$acts are closed under directed colimits \cite[Proposition 5.2]{stenstrom-71}, the free $S-$acts are also closed under directed colimits and the result follows from Lemma~\ref{free-precover-lemma} and Theorem~\ref{precover-cover-theorem}.
\end{proof}

\section{Covers with the unique mapping property}

An $\X-$(pre)cover $g:X \to A$ of an $S-$act $A$ is said to have the {\em unique mapping property} if whenever there is an $S-$map $h:X' \to A$ with $X' \in \X$, there is a unique $S-$map $f:X' \to X$ such that $h=gf$.

Clearly an $\X-$precover with the unique mapping property is an $\X-$cover with the unique mapping property as the unique identity map is an isomorphism.

\medskip

Note that every act having an $\X-$cover with the unique mapping property is equivalent to saying that $\X$ is a {\em coreflective subcategory} of the category of all $S-$acts. That is to say, the inclusion functor has a right adjoint. See \cite[Exercises 3.J and 3.M]{freyd-64} for more details and from which some of the next results are based.

\begin{lemma} \label{UMP-by-colimits}
Let $S$ be a monoid and let $\X$ be a class of $S-$acts closed under colimits. If an $S-$act has an $\X-$precover then it has an $\X-$cover with the unique mapping property.
\end{lemma}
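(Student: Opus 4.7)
The plan is to combine Theorem~\ref{precover-cover-theorem} with a coequalizer argument. Closure under all colimits certainly implies closure under directed colimits, so Theorem~\ref{precover-cover-theorem} upgrades the given $\X$-precover into an $\X$-cover $g:C\to A$. The remaining task is to check that this cover automatically has the unique mapping property.

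To that end, suppose $h_1,h_2:X'\to C$ are two $S$-maps with $X'\in\X$ and $gh_1=gh_2$; we want $h_1=h_2$. Form the coequalizer $q:C\to C'$ of the parallel pair $(h_1,h_2)$ in the category of $S$-acts. Since $X'$ and $C$ both lie in $\X$ and $\X$ is closed under colimits, $C'\in\X$. Because $gh_1=gh_2$, the universal property of the coequalizer yields a unique $S$-map $\bar g:C'\to A$ with $\bar g q = g$.

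Now apply the precover property of $g$ to $\bar g:C'\to A$ (valid since $C'\in\X$): there exists $s:C'\to C$ with $gs=\bar g$. Then $g(sq)=\bar g q=g$, so by the cover condition $sq:C\to C$ is an automorphism. Consequently $(sq)^{-1}s$ is a left inverse of $q$, making $q$ a monomorphism, and the defining equality $qh_1=qh_2$ collapses to $h_1=h_2$, as required.

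The main obstacle is simply ensuring that $C'$ lies back in $\X$: this is exactly where the hypothesis of closure under all colimits (stronger than the directed closure needed for Theorem~\ref{precover-cover-theorem}) does real work, since coequalizers are not generally directed colimits. Once the coequalizer is available the cover property of $g$ is tailored precisely to turn $q$ into a split (and hence ordinary) monomorphism, and uniqueness falls out for free.
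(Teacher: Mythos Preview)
Your proof is correct and follows essentially the same route as the paper: pass from precover to cover via Theorem~\ref{precover-cover-theorem}, then use a coequalizer argument to get uniqueness. The only differences are cosmetic. The paper restricts attention to two endomorphisms $f_1,f_2:C\to C$ with $gf_1=gf_2=g$ and invokes \cite[Proposition~4.16]{bailey-12} to conclude the coequalizer congruence is trivial, whereas you treat arbitrary $h_1,h_2:X'\to C$ with $gh_1=gh_2$ and spell out directly why the coequalizer map $q$ is a split monomorphism (via $sq$ being an automorphism). Your version is in fact slightly cleaner, since it establishes the full unique mapping property in one step rather than only the endomorphism case.
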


\begin{proof}
If an $S-$act $A$ has an $\X-$precover, then by Theorem~\ref{precover-cover-theorem} it has an $\X-$cover, say $g:C \to A$. Let $f_1,f_2$ be two endomorphisms of $C$ such that $gf_1=gf_2=g$, we intend to show that $f_1=f_2$ and so the unique mapping property holds. Let $(h,E)$ be the coequalizer of $f_1$ and $f_2$ in $C$, so that by \cite[Proposition II.2.21]{kilp-00}, $E=C/\rho$ where $\rho$ is the smallest congruence generated by the pairs $\{(f_1(c),f_2(c)) : c \in C\}$. Since $g(f_1(c))=g(c)=g(f_2(c))$ it is clear that $\rho \subseteq \ker(g)$. Since $\X$ is closed under colimits $E \in \X$ and by \cite[Proposition 4.16]{bailey-12}, $\rho=id_C$ and hence $f_1=f_2$.
\end{proof}

\begin{lemma}
Let $S$ be a monoid and let $\X$ be a class of $S-$acts. If every $S-$act has an $\X-$cover with the unique mapping property then $\X$ is closed under colimits.
\end{lemma}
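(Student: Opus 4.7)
The plan is to show that for any diagram $\{X_i\}_{i\in I}$ in $\X$, its colimit $X$ in the category of $S$-acts actually lies in $\X$. The conceptual reason is that the unique mapping property makes the assignment $A\mapsto C_A$ (taking an $S$-act to the domain of its UMP cover) into a right adjoint of the inclusion $\X\hookrightarrow\mathbf{Act}_S$, and inclusions that are left adjoints preserve colimits. I will unpack this into an explicit argument rather than invoking adjoint functor machinery.

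Let $\{X_i\}_{i\in I}$ be a diagram in $\X$ with colimit injections $\mu_i:X_i\to X$ in $\mathbf{Act}_S$. By hypothesis, let $c:C\to X$ be an $\X$-cover of $X$ with the unique mapping property. Applying UMP to each $\mu_i$ (recall $X_i\in\X$) produces a unique $S$-map $f_i:X_i\to C$ with $cf_i=\mu_i$. The key compatibility check is that the $f_i$ form a cocone on the diagram: for each morphism $\varphi:X_i\to X_j$ in the diagram we have $cf_j\varphi=\mu_j\varphi=\mu_i=cf_i$, and uniqueness in the UMP (applied to $\mu_i:X_i\to X$) forces $f_j\varphi=f_i$.

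Because $X$ is the colimit in $\mathbf{Act}_S$, this cocone induces a unique $S$-map $f:X\to C$ satisfying $f\mu_i=f_i$ for all $i$. Composing gives $cf\mu_i=cf_i=\mu_i$, so the endomorphism $cf$ of $X$ agrees with $\mathrm{id}_X$ on every colimit injection; by the uniqueness in the colimit's universal property, $cf=\mathrm{id}_X$. For the other composite, observe that $c(fc)=(cf)c=c$, so both $fc$ and $\mathrm{id}_C$ are $S$-endomorphisms of $C$ making the triangle with $c:C\to X$ commute; applying UMP with $X'=C$ and $h=c$ (valid since $C\in\X$) forces $fc=\mathrm{id}_C$.

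Thus $c$ and $f$ are mutually inverse isomorphisms, so $X\cong C\in\X$, and since $\X$ is closed under isomorphisms we conclude $X\in\X$. The main thing to be careful about is invoking UMP in the right places: once for each $\mu_i$ to build the $f_i$, and once with target $C$ itself to pin down $fc$ as the identity; the colimit universal property handles the other composite. No step looks genuinely hard, but the argument relies crucially on $\X$ being closed under isomorphisms, which is part of the standing assumption on $\X$ from the preliminaries.
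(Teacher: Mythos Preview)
Your proof is correct and follows essentially the same approach as the paper: take the $\X$-cover $c:C\to X$ of the colimit, lift the cocone legs through $c$ via the unique mapping property, induce $f:X\to C$ from the colimit's universal property, and then show $cf=\mathrm{id}_X$ via colimit uniqueness and $fc=\mathrm{id}_C$ via the UMP. The only cosmetic difference is that the paper phrases the diagram as a direct system $(X_i,\phi_{i,j})$, while you work with an arbitrary diagram shape; the argument is otherwise identical.
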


\begin{proof}
Let $(X_i,\phi_{i,j})_{i \in I}$ be a direct system of $S-$acts with colimit $(X,\alpha_i)$. Let $g:C \to X$ be the $\X$-cover of $X$ so that for each $i \in I$ there exists a unique $f_i:X_i \to C$ with $gf_i=\alpha_i$. Note that if $i\le j$ then $gf_i=\alpha_i=\alpha_j \phi_{i,j}=(gf_j)\phi_{i,j}=g(f_j\phi_{i,j})$ and so by the unique mapping property $f_i=f_j \phi_{i,j}$ for all $i \le j$. Hence by the colimit property, there exists a unique $S-$map $f:X \to C$ such that $f \alpha_i=f_i$ for all $i \in I$. Therefore $\alpha_i=gf_i=g(f \alpha_i)=(gf)\alpha_i$ and since, by the colimit property, there exists a unique $S-$map $h:X \to X$ with $h \alpha_i=\alpha_i$ for all $i \in I$, we clearly have $gf=id_X$. But then $g(fg)=(gf)g=g$ and by the unique mapping property $fg=id_C$ and so $X$ is isomorphic to $C \in \X$.
\end{proof}

Hence by Theorem~\ref{solution-set-theorem} we have the following

\begin{theorem}
Let $S$ be a monoid and $\X$ a class of $S-$acts. Every $S-$act has an $\X-$cover with the unique mapping property if and only if
\begin{enumerate}
\item $\X$ is closed under colimits.
\item For every $S-$act $A$ there exists $X \in \X$ such that Hom$(X,A) \neq \emptyset$.
\item $\X$ satisfies the solution set condition.
\end{enumerate}
\end{theorem}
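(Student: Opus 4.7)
The plan is to assemble this theorem as a direct combination of the two immediately preceding lemmas with Theorem~\ref{solution-set-theorem}. The key preliminary observation is that closure under colimits implies closure under disjoint unions (which are simply coproducts in the category of $S$-acts), so that condition (1) is exactly the standing hypothesis needed to invoke Theorem~\ref{solution-set-theorem}.

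For the forward direction, I would assume every $S$-act has an $\X$-cover with the unique mapping property. The preceding lemma gives condition (1) immediately. In particular $\X$ is then closed under disjoint unions, and since any $\X$-cover is a fortiori an $\X$-precover, the ``only if'' part of Theorem~\ref{solution-set-theorem} delivers conditions (2) and (3).

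For the converse, assume (1), (2), (3). By (1), $\X$ is closed under disjoint unions, so Theorem~\ref{solution-set-theorem} is applicable, and conditions (2) and (3) then guarantee that every $S$-act admits an $\X$-precover. Since $\X$ is closed under colimits, Lemma~\ref{UMP-by-colimits} upgrades each such precover to an $\X$-cover with the unique mapping property, completing the argument.

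No real obstacle is expected here; the proof is essentially a bookkeeping assembly. The only step worth double-checking is the translation between ``closed under colimits'' and the coproduct hypothesis of Theorem~\ref{solution-set-theorem}, which is what lets the solution-set theorem be cited cleanly in both directions.
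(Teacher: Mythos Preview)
Your proposal is correct and matches the paper's approach exactly: the paper presents this theorem with the single line ``Hence by Theorem~\ref{solution-set-theorem} we have the following,'' relying implicitly on the two preceding lemmas in precisely the way you spell out. Your observation that closure under colimits supplies the coproduct-closure hypothesis needed to invoke Theorem~\ref{solution-set-theorem} in both directions is the one point the paper leaves tacit, and you have handled it correctly.
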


Recall from \cite[Theorem II.3.16]{kilp-00} that an $S-$act $G$ is called a {\em generator} if there exists an epimorphism $G \to S$.

\begin{theorem} \label{generator-and-colimits}
Let $S$ be a monoid and let $\X$ be a class of $S-$acts containing a generator which is closed under colimits. Then every $S-$act has an $\X-$cover with the unique mapping property. 
\end{theorem}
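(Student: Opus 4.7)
The plan is to invoke the equivalence in the preceding theorem, reducing the task to verifying conditions (1), (2), and (3). Condition (1) is exactly the hypothesis. For condition (2), by definition of generator there is an epimorphism $\pi \colon G \to S$, so for any nonempty $S$-act $A$ and any $a \in A$ the composition $G \xrightarrow{\pi} S \to A$, the second map sending $s$ to $as$, witnesses $\text{Hom}(G, A) \neq \emptyset$. When $A = \emptyset$, the empty act is the colimit of the empty diagram and hence belongs to $\X$ by closure, and the empty map suffices.

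The bulk of the work is verifying the solution set condition (3). For a given $S$-act $A$ I would fix the cardinal $\kappa = \max(|A| \cdot |G| \cdot |S|, \aleph_0)$ and let $S_A$ be a set of representatives of isomorphism classes of $\X$-acts $Y$ with $|Y| \leq \kappa$. Given any $h \colon X \to A$ with $X \in \X$, the task is to produce $Y \in S_A$, $f \colon X \to Y$ and $g \colon Y \to A$ with $gf = h$. A natural candidate for $Y$ arises from the generator: since $|h(X)| \leq |A|$, the coproduct $\tilde B = \bigsqcup_{b \in h(X)} G$ is a set-indexed colimit of copies of $G$, so it lies in $\X$; it carries a canonical $S$-map onto $h(X) \subseteq A$; and $|\tilde B| \leq \kappa$, so $\tilde B$ is in $S_A$.

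The main obstacle lies in producing the map $f \colon X \to Y$ in an $S$-equivariant way. The naive assignment $f(x) = (h(x), g_0)$, with $g_0 \in G$ chosen so that $\pi(g_0) = 1$, fails: the element $f(x) \cdot s = (h(x), g_0 s)$ sits in the $h(x)$-summand of $\tilde B$ whereas $f(xs) = (h(x)s, g_0)$ sits in the $h(x)s$-summand, and these disagree whenever $h(x) \neq h(x) s$. So one cannot use $\tilde B$ directly; the candidate $Y$ must instead be a suitable bounded-cardinality colimit of copies of $G$ — in effect, a quotient of a coproduct indexed by pairs in $h(X) \times S$, or by $\text{Hom}(G, A) \times G$ modulo a natural congruence — engineered so that the two summands above are identified while the total cardinality remains at most $\kappa$. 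Closure of $\X$ under colimits keeps this enlargement inside $S_A$. This is the Freyd-style solution set construction alluded to in \cite[Exercises 3.J and 3.M]{freyd-64}, and it is the lynchpin that makes the argument go through.
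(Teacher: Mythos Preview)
Your strategy—reduce to the preceding theorem and verify its three conditions—is sound, and (1) and (2) are handled correctly. But your treatment of the solution set condition (3) stops precisely where the content lies. You correctly diagnose why the naive map into $\tilde B = h(X) \times G$ fails to be an $S$-map, and then gesture at ``a suitable bounded-cardinality colimit'' and an unspecified ``natural congruence'' without saying what either is, or—crucially—why an $S$-map from the arbitrarily large $X$ into the result should exist. Citing Freyd's exercises does not discharge the obligation; the construction \emph{is} the proof here.

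The missing step is a pushout. Form the pushout $P$ of the canonical epimorphism $g_X \colon X \times G \to X$, $(x,g)\mapsto x\,\pi(g)$, along $m \colon X \times G \to A \times G$, $(x,g)\mapsto (h(x),g)$. Closure under colimits gives $P \in \X$; since $g_X$ is onto, so is the induced leg $A \times G \to P$, whence $|P| \le |A|\cdot|G| \le \kappa$; and because $g_A m = h\, g_X$, the universal property yields a map $P \to A$ through which $h$ factors via the other leg $X \to P$. This $P$ is your $Y$. The paper does not isolate the solution set condition but builds the precover directly: it takes the colimit $M$ of the directed system of all $\X$-quotients of $A \times G$ over which $g_A$ factors, and then—to verify the precover property against an arbitrary $\sigma\colon X \to A$—forms \emph{exactly this pushout} and uses the fact that the resulting triple sits in the system to map it into $M$. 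So the two routes share the same technical kernel; yours simply leaves that kernel unwritten.
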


\begin{proof}
Let $G \in \X$ be a generator with $S-$epimorphism $h:G \to S$. Given any $S-$act $A$, let $A \times G$ be the $S-$act with the action on the right component, so that we have an $S-$epimorphism $g_A:A \times G \to A$, $(a,y) \mapsto ah(y)$. Notice that $A\times G$ is isomorphic to a coproduct of $|A|$ copies of $G$ and so $A\times G\in\X$. Consider, up to isomorphism, the set $(X_i,g_i,f_i)_{i \in I}$ of all $S-$acts $X_i \in \X$ and $S-$epimorphisms $g_i:A \times G \to X_i$ such that there exist $f_i:X_i \to A$ with $f_i g_i=g_A$. Notice that $(A\times G, 1_{A\times G},g_A)$ is one such triple and so $I\ne\emptyset$, and that this is indeed a set since $|X_i| \le |A \times G|$. Define an order on this set $(X_i,g_i,f_i) \leq (X_j,g_j,f_j)$ if and only if there exists $\phi_{i,j}:X_i \to X_j$ with $\phi_{i,j}g_i=g_j$ and $f_j\phi_{i,j}=f_i$.
$$
\begin{tikzpicture}[description/.style={fill=white,inner sep=2pt}]
\matrix (m) [matrix of math nodes, row sep=2em,
column sep=2em, text height=1.5ex, text depth=0.25ex]
{& A\times G&\\ 
X_i&&X_j\\
&A&\\};
\path[->,font=\scriptsize]
(m-1-2) edge node[auto,left] {$g_i$} (m-2-1)
(m-1-2) edge node[auto, right] {$g_j$} (m-2-3)
(m-2-1) edge node[auto,above] {$\phi_{i,j}$} (m-2-3)
(m-2-1) edge node[auto,left] {$f_i$} (m-3-2)
(m-2-3) edge node[auto,right] {$f_j$} (m-3-2);
\end{tikzpicture}
$$
Notice that since $g_i$ is onto then if such a $\phi_{i,j}$ exists then it is unique. It is not hard to see that this is a partial order, and $(X_i,\phi_{i,j})_{i \in I}$ is a direct system. In fact, this order has a least element $(X_0,1_{A \times G},g_A)$, where $X_0=A \times G$ and $\phi_{0,i}=g_i$ for all $i \in I$. Let $(M,\alpha_i)$ be the colimit of this system, since each $\phi_{i,j}$ is an epimorphism, so are the $\alpha_i$ and since $\X$ is closed under colimits, $M \in \X$. Since $f_j\phi_{i,j}=f_i$ for all $i \leq j \in I$ there must exist some $f:M \to A$ such that $f\alpha_i=f_i$ for all $i \in I$. Since $M \in \X$ and $\alpha_0$ is an epimorphism we see that $(M,\alpha_0,f)$ is in fact a maximal element in the ordering.

We claim that $f:M \to A$ is an $\X$-precover of $A$. Given any $X \in \X$ with $S-$map $\sigma:X \to A$, let $g_X:X \times G \to X$, $(x,y) \mapsto xh(y)$ be an $S-$epimorphism.
As before, observe that $A\times G, X\times G\in\X$.
Define $m:X \times G \to A \times G$ by $m(x,y) = (\sigma(x),y)$ and consider the pushout diagram
$$
\begin{tikzpicture}[description/.style={fill=white,inner sep=2pt}]
\matrix (m) [matrix of math nodes, row sep=3em,
column sep=2.5em, text height=1.5ex, text depth=0.25ex]
{X\times G & A\times G\\ 
X&Q\\};
\path[->,font=\scriptsize]
(m-2-1) edge node[auto,below] {$q_2$} (m-2-2)
(m-1-2) edge node[auto, right] {$q_1$} (m-2-2)
(m-1-1) edge node[auto,above] {$m$} (m-1-2)
(m-1-1) edge node[auto,left] {$g_X$} (m-2-1);
\end{tikzpicture}
$$
Since $g_X$ is an epimorphism then so is $q_1$ \cite[Lemma I.3.6]{renshaw-85} and since $\X$ is closed under colimits then $Q \in \X$. By \cite[Proposition II.2.16]{kilp-00}, $Q=(X \dot\bigcup (A \times G))/\rho$ where $\rho=\{(m(z),g_X(z)) : z \in X \times G\}^\#$. Since $g_Am=\sigma g_X$ then there exists a unique $\psi:Q\to A$ such that $\psi q_1=g_A, \psi q_2 = \sigma$
and so by the maximality of $(M,\alpha_0,f)$ there exists an $S-$map $\phi:Q \to M$ such that $\phi q_1=\alpha_0$. It is straightforward to check that $f \phi q_2 =\sigma$, and so $f:M \to A$ is an $\X-$precover of $A$. Since $\X$ is closed under colimits, we can apply Lemma \ref{UMP-by-colimits}.
\end{proof}

So by \cite[Corollary 4.4]{bailey-12} we get the following result

\begin{corollary}
Let $S$ be a monoid and let $\X$ be a class of $S-$acts. Every $S$-act has an epimorphic $\X-$cover with the unique mapping property if and only if $\X$ contains a generator and is closed under colimits.
\end{corollary}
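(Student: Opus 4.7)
The plan is to obtain both implications as immediate consequences of what has just been set up: the preceding lemma, Theorem~\ref{generator-and-colimits}, and the cited \cite[Corollary 4.4]{bailey-12}, which supplies the epimorphic refinement. No new colimit or solution-set arguments are needed.

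For the sufficiency direction I would apply Theorem~\ref{generator-and-colimits} to $\X$, obtaining for each $S-$act $A$ an $\X-$cover with the unique mapping property. To see that this cover can be taken to be epimorphic, I would note that in the construction used there the cover $f:M\to A$ satisfies $f\alpha_0 = g_A$, where $g_A:A\times G\to A$, $(a,y)\mapsto a h(y)$, is already an epimorphism because $h:G\to S$ is (so in particular $1=h(y_0)$ for some $y_0\in G$ and then $a = g_A(a,y_0)$). Hence $f$ is onto, giving an epimorphic $\X-$cover with the unique mapping property; the citation to \cite[Corollary 4.4]{bailey-12} is presumably precisely the general packaging of this observation.

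For the necessity direction, closure of $\X$ under colimits is exactly the content of the lemma stated immediately before Theorem~\ref{generator-and-colimits}, whose proof never uses that the covers are epimorphic. To produce a generator inside $\X$, I would apply the hypothesis to the particular $S-$act $A=S$: this yields an epimorphic $\X-$cover $g:G\to S$ with $G\in\X$, and by the definition recalled just before Theorem~\ref{generator-and-colimits} this $G$ is a generator. The only real bookkeeping is making sure the epimorphic clause in the hypothesis is actually used (it is, in producing the generator) and that the epimorphic clause in the conclusion can be arranged (it can, via the observation above); beyond that, there is no genuine obstacle.
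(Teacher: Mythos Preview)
Your proposal is correct and follows the same approach the paper has in mind: the paper simply records the corollary as a consequence of Theorem~\ref{generator-and-colimits} together with \cite[Corollary 4.4]{bailey-12}, and you have unpacked exactly what those citations are doing---Theorem~\ref{generator-and-colimits} plus the observation $f\alpha_0=g_A$ for sufficiency, and the preceding lemma plus applying the hypothesis to $A=S$ for necessity.
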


\medskip

Note that although $\SF$ (strongly flat acts), $\CP$ (condition $(P)$ acts), $\F$ (flat acts) and $\T$ all contain $S$ as a generator, they are only closed under directed colimits not all colimits in general. However, as we shall see in the next section, the class $\D$ of all divisible $S-$acts is closed under colimits.

\section{Divisible covers}

As mentioned previously, an obvious necessary condition for an $S-$act $A$ to have an $\X-$cover is the existence of an $S-$act $C\in X$ such that $\hbox{\rm Hom}_S(C,A)\ne\emptyset$. It is fairly obvious that if $\X$ includes all the free acts then this condition is always satisfied. We consider here the class of divisible acts where this condition is not always satisfied and where the covers, when they exist, are monic rather than epic.

\begin{proposition}[{\cite[Proposition III.2.4]{kilp-00}}] \label{divisible-results}
Let $S$ be a monoid
\begin{enumerate}
\item Any homomorphic image of a divisible $S-$act is divisible.
\item $\dot\bigcup_{i \in I}D_i$ is divisible if and only if each $D_i$ are divisible.
\end{enumerate}
\end{proposition}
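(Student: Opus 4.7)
The plan is to verify both parts directly from the definition of divisibility, as neither requires any machinery beyond tracking elements under the action.

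For part (1), let $\pi : D \to B$ be a surjective $S$-homomorphism with $D$ divisible. Given $b \in B$ and a left cancellative element $c \in S$, I would first lift $b$ to some $a \in D$ with $\pi(a) = b$. By divisibility of $D$ there exists $d \in D$ with $a = dc$, and then $b = \pi(a) = \pi(dc) = \pi(d)c$, which exhibits $b$ as divisible by $c$ in $B$.

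For part (2), both directions are element-chases, and the key structural fact I would invoke is that in the disjoint union (coproduct) $\dot\bigcup_{i \in I} D_i$ each $D_i$ is a subact, and in particular the components are closed under the action of $S$ and pairwise disjoint. For the forward direction, assume the union is divisible; pick $j \in I$, $x \in D_j$, and $c \in S$ left cancellative. Then $x = yc$ for some $y$ in the union. Since $yc$ lies in whichever component contains $y$, and $x \in D_j$, disjointness forces $y \in D_j$, establishing divisibility of $D_j$. The reverse direction is even quicker: any element of the union lies in some $D_j$, and divisibility of $D_j$ supplies the required $d \in D_j \subseteq \dot\bigcup_{i \in I} D_i$.

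There is no real obstacle here; the only point that deserves a sentence of comment is the containment argument in the forward direction of part (2), which silently uses that in the disjoint union the orbit of any element stays inside its own summand. This is where the hypothesis of a coproduct (as opposed to some arbitrary union) is actually being used. Everything else is a one-line verification from the definition.
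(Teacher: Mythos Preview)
Your proof is correct and is the standard elementary verification from the definition of divisibility. Note that the paper does not actually give a proof of this proposition: it is quoted from \cite[Proposition III.2.4]{kilp-00} without argument, so there is no ``paper's own proof'' to compare against, but your write-up is exactly the kind of routine check one would expect that reference to contain.
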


\begin{lemma}\label{d-closed-lemma}
$\D$ is closed under colimits.
\end{lemma}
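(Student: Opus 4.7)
The plan is to reduce the statement to the two parts of Proposition~\ref{divisible-results} already quoted, using the standard construction of arbitrary colimits in the category of $S$-acts as coequalizers of coproducts (equivalently, as quotients of coproducts by a suitable congruence). So I would proceed in two short steps.

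First, I would take an arbitrary diagram $(X_i,\phi_{i,j})_{i\in I}$ in $\D$ and assemble the coproduct $C=\dot\bigcup_{i\in I}X_i$. By part (2) of Proposition~\ref{divisible-results}, $C$ is divisible. The colimit of the diagram is then given by $C/\rho$, where $\rho$ is the $S$-congruence on $C$ generated by the pairs $(\iota_i(x),\iota_j(\phi_{i,j}(x)))$ for $x\in X_i$ and $i\le j$ in the diagram, with $\iota_k:X_k\to C$ the coproduct injections (cf.~\cite[Proposition II.2.21]{kilp-00} for the coequalizer description, applied twice to realise a general colimit).

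Second, I would observe that the canonical projection $C\to C/\rho$ is an $S$-epimorphism, so $C/\rho$ is a homomorphic image of the divisible act $C$. Part (1) of Proposition~\ref{divisible-results} then gives that $C/\rho$ is divisible, proving that $\D$ is closed under colimits.

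There is no real obstacle here; the only thing to be careful about is that Proposition~\ref{divisible-results} as stated is phrased only for coproducts and for homomorphic images, so the argument hinges on invoking the general fact that every colimit in the category of $S$-acts is a quotient of a coproduct. Once that bookkeeping is in place the two bullets of Proposition~\ref{divisible-results} do all the work.
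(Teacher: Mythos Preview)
Your proof is correct and is a clean, modular reduction to Proposition~\ref{divisible-results}. The paper takes a slightly different but closely related route: rather than invoking the coproduct-then-quotient decomposition, it argues directly at the element level. Given $x$ in the colimit $X$ and a left cancellative $c$, the paper picks some $x_i\in X_i$ with $\alpha_i(x_i)=x$ (using that the colimit maps are jointly surjective), finds $d_i\in X_i$ with $x_i=d_ic$ by divisibility of $X_i$, and concludes $x=\alpha_i(d_i)c$. In effect the paper re-does the content of Proposition~\ref{divisible-results} in one combined step, whereas you factor the argument through that proposition explicitly. Your approach has the advantage of making the dependence on Proposition~\ref{divisible-results} transparent and of avoiding any element-chasing; the paper's approach is self-contained and does not require the reader to recall how general colimits are built from coproducts and coequalizers. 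Both are entirely adequate for such a short lemma.
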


\begin{proof}
Let $(X_i,\phi_{i,j})_{i \in I}$ be a direct system of divisible $S-$acts and let $(X,\alpha_i)$ be the colimit. For each $x \in X$ and left cancellative $c \in S$ there exists $x_i \ \in X_i$ with $\alpha_i(x_i)=x$ and, since $X_i$ is divisble, there exists $d_i \in X_i$ such that $x_i=d_ic$. So $x=\alpha_i(x_i)=\alpha_i(d_i c)=\alpha_i(d_i)c$ and $X$ is divisible.
\end{proof}

However, although $\D$ is closed under colimits, it does not always contain a generator. In fact we have the following

\begin{lemma}
Let $S$ be a monoid, then the following are equivalent
\begin{enumerate}
\item $\D$ has a generator.
\item $S$ is divisible
\item All left cancellative elements of $S$ are left invertible.
\item Every $S-$act is divisible.
\item Every $S-$act has an epimorphic $\D-$cover
\end{enumerate}
\end{lemma}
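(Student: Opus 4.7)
The plan is to establish a short cycle of implications among the five conditions, relying on the easily checked fact that divisibility of $S$ itself translates directly to a statement about left cancellative elements.

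First I would prove (2) $\Leftrightarrow$ (3) directly from the definition. Applying divisibility of $S$ to the element $1 \in S$ and a left cancellative $c$ produces $d \in S$ with $1 = dc$, so $c$ is left invertible; conversely, if every left cancellative $c$ has a left inverse $d$, then for any $s \in S$, $s = s(dc) = (sd)c$, giving divisibility of $S$.

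Next I would prove (3) $\Rightarrow$ (4) $\Rightarrow$ (1) $\Rightarrow$ (2), closing the main cycle. For (3) $\Rightarrow$ (4), let $A$ be any $S$-act, $a \in A$, and $c \in S$ left cancellative with left inverse $d$; then $a = a(dc) = (ad)c$, so $A$ is divisible. For (4) $\Rightarrow$ (1), observe that $S$ is itself an $S$-act, so if (4) holds then $S \in \D$, and $S$ is a generator of the whole category (via $1_S$), hence a generator inside $\D$. For (1) $\Rightarrow$ (2), a generator $G$ of $\D$ admits an $S$-epimorphism $G \to S$ with $G$ divisible, so by Proposition~\ref{divisible-results}(1), $S$ is a homomorphic image of a divisible act and is therefore divisible.

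Finally I would handle (4) $\Leftrightarrow$ (5). For (4) $\Rightarrow$ (5), if every $S$-act is divisible then $A \in \D$ for any $A$, so $1_A : A \to A$ is trivially a $\D$-precover; any endomorphism $f$ of $A$ with $1_A \circ f = 1_A$ forces $f = 1_A$, an isomorphism, so $1_A$ is a $\D$-cover, and it is plainly epimorphic. For (5) $\Rightarrow$ (4), an epimorphic $\D$-cover $g : C \to A$ exhibits $A$ as a homomorphic image of the divisible act $C$, so $A$ is divisible by Proposition~\ref{divisible-results}(1).

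There is no real obstacle here; the content is essentially the observation that all five conditions reduce to the concrete arithmetic statement (3) about left cancellative elements, and the two ``cover'' conditions (1) and (5) collapse because $S$ itself is always available as a candidate generator and $1_A$ is always available as a candidate cover. The only place where care is needed is in verifying that (1) forces $S \in \D$ rather than merely giving some divisible act mapping onto $S$; this is handled by invoking Proposition~\ref{divisible-results}(1).
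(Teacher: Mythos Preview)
Your proof is correct and follows essentially the same route as the paper. The only differences are cosmetic: the paper delegates the equivalence of (2), (3), and (4) to a citation (\cite[Proposition III.2.2]{kilp-00}) while you spell it out directly, and the paper closes the loop with $(5)\Rightarrow(1)$ (the epimorphic $\D$-cover of $S$ is itself a generator in $\D$) rather than your $(5)\Rightarrow(4)$ via Proposition~\ref{divisible-results}(1).
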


\begin{proof}
The equivalence of $(2), (3)$ and $(4)$ follows by \cite[Proposition III.2.2]{kilp-00}.

$(1) \Rightarrow (2)$ If $G \in \X$ is a generator, then there exists an epimorphism $g:G \to S$. Hence $S$ is the homomorphic image of a divisible $S-$act and so is divisble. \\
$(4) \Rightarrow (5)$ Every $S-$act is its own epimorphic $\D-$cover. \\
$(5) \Rightarrow (1)$ The epimorphic $\D-$cover of $S$ is a generator in $\D$.
\end{proof}

Since the union of a set of divisible acts is itself divisible then every $S-$act $A$ which contains a divisible subact has a unique largest divisible subact $D_A=\bigcup_{i \in I}D_i$ where $\{D_i : i \in I\}$ is the set of all divisible subacts of $A$.

\begin{theorem}
Let $S$ be a monoid and $A$ an $S-$act. Then we have the following
\begin{enumerate}
\item $D$ is a $\D-$precover of $A$.
\item $D$ is a $\D-$cover of $A$.
\item $D$ is a $\D-$cover of $A$ with the unique mapping property.
\item $D=D_A$ is the largest divisible subact of $A$.
\end{enumerate}
\end{theorem}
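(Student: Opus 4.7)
The plan is to run the cycle $(4) \Rightarrow (3) \Rightarrow (2) \Rightarrow (1) \Rightarrow (4)$, so that the four statements stand or fall together and the cover of $A$ by divisibles is precisely the inclusion $D_A \hookrightarrow A$. The implications $(3) \Rightarrow (2)$ and $(2) \Rightarrow (1)$ are formal: the first is the remark made immediately after the definition of the unique mapping property in Section~3 (a precover with UMP is automatically a cover), and the second is by definition.

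For $(4) \Rightarrow (3)$, I would take the inclusion $\iota : D_A \hookrightarrow A$ and verify the UMP cover property directly. Given any divisible $X$ and any $S$-map $h : X \to A$, Proposition~\ref{divisible-results}(1) tells us that $h(X)$ is a divisible subact of $A$, and so $h(X) \subseteq D_A$. Hence $h$ factors through $\iota$ via its corestriction $f : X \to D_A$, and the uniqueness of $f$ is immediate from the injectivity of $\iota$. This yields the UMP, which in turn gives the cover property by the remark above.

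For $(1) \Rightarrow (4)$, suppose $g : D \to A$ is a $\D$-precover. Proposition~\ref{divisible-results}(1) shows $g(D)$ is a divisible subact of $A$, so the family of divisible subacts of $A$ is nonempty, and Proposition~\ref{divisible-results}(2) then guarantees that its union $D_A$ is itself divisible, hence is the largest divisible subact of $A$. To identify $D$ with $D_A$, apply the precover property to the inclusion $\iota : D_A \to A$ to produce $f : D_A \to D$ with $gf = \iota$; injectivity of $\iota$ forces $g_0 f = 1_{D_A}$, where $g_0 : D \to D_A$ is the corestriction of $g$ (which makes sense as $g(D) \subseteq D_A$). The endomorphism $fg_0 : D \to D$ satisfies $g(fg_0) = \iota g_0 = g$, so after upgrading to a cover and then a UMP cover (via Theorem~\ref{precover-cover-theorem} and Lemma~\ref{UMP-by-colimits}, both of which apply because of Lemma~\ref{d-closed-lemma}), one concludes $fg_0 = 1_D$, and together with $g_0 f = 1_{D_A}$ this gives $D \cong D_A$.

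The main obstacle is in $(1) \Rightarrow (4)$: a bare precover already yields $g(D) = D_A$ and a retraction $D \to D_A$, but does not by itself rule out extra endomorphisms of $D$ that would prevent $g_0$ from being an isomorphism. To eliminate these one must pass through the cover/UMP framework, which is made available by Lemma~\ref{d-closed-lemma} (closure of $\D$ under colimits). This closure is the real workhorse of the whole equivalence, feeding both Theorem~\ref{precover-cover-theorem} (precover $\Rightarrow$ cover) and Lemma~\ref{UMP-by-colimits} (precover $\Rightarrow$ UMP cover), and it is what ultimately pins down the cover as the largest divisible subact.
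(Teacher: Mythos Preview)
Your $(4)\Rightarrow(3)$ is fine and in fact slightly more direct than the paper's route: you show by hand that the inclusion $D_A\hookrightarrow A$ is a precover with the unique mapping property (using injectivity of $\iota$ for uniqueness), whereas the paper first shows $(4)\Rightarrow(1)$ by the same image-containment argument and then invokes Lemma~\ref{UMP-by-colimits} abstractly to pass from $(1)$ to $(3)$.

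The gap is in your $(1)\Rightarrow(4)$, precisely at the ``upgrading'' step. Theorem~\ref{precover-cover-theorem} and Lemma~\ref{UMP-by-colimits} tell you that \emph{some} $\D$-cover with the unique mapping property exists once a precover does; they do not say that your given precover $g:D\to A$ is itself that cover. So from $g(fg_0)=g$ you cannot conclude $fg_0=1_D$: the unique mapping property belongs to $D_A$, not to $D$. Indeed the implication ``$D$ is a $\D$-precover $\Rightarrow$ $D\cong D_A$'' is false in general. Take any monoid in which every act is divisible (e.g.\ one whose only left cancellable element is $1$); then every $S$-map into $A$ is a $\D$-precover, and certainly not all of these have domain isomorphic to $D_A$.

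The paper sidesteps this by closing the cycle with $(2)\Rightarrow(4)$ rather than $(1)\Rightarrow(4)$: once $g:D\to A$ is assumed to be a \emph{cover}, one observes (via the $(4)\Rightarrow(1)$ and $(1)\Rightarrow(3)$ steps already established) that $D_A\hookrightarrow A$ is also a cover, and then appeals to uniqueness of covers up to isomorphism. Your argument is easily repaired along the same lines: keep your first paragraph of $(1)\Rightarrow(4)$ (which correctly shows $D_A$ exists whenever a precover exists), drop the attempt to identify an arbitrary precover with $D_A$, and instead record the identification $D\cong D_A$ only under hypothesis $(2)$, using uniqueness of covers.
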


\begin{proof}
Clearly $(3) \Rightarrow (2) \Rightarrow (1)$ and $(1) \Rightarrow (3)$ by Lemmas \ref{UMP-by-colimits} and \ref{d-closed-lemma}. \\
$(4) \Rightarrow (1)$ Let $X$ be a divisible $S-$act $h:X \to A$ a homomorphism. By Proposition \ref{divisible-results}, $\im(h)$ is a divisible subact of $A$ and so $\im(h) \subseteq D$. Therefore $h:X \to D$ is a well-defined $S-$map obviously commuting with the inclusion map. Hence $D$ is a $\D-$precover of $A$. \\
$(2) \Rightarrow (4)$ Let $g:D \to A$ be a $\D-$cover of $A$. Then as before, $A$ contains a divisible subact and so by the previous case, the largest divisible subact of $A$ is also a $\D-$cover of $A$. But $\D-$covers are unique up to isomorphism.
\end{proof}

We therefore have the following result

\begin{theorem}
Let $S$ be a monoid. Then the following are equivalent
\begin{enumerate}
\item Every $S-$act has a $\D-$cover.
\item Every $S-$act has a divisible subact.
\item $S$ contains a divisible right ideal $K$.
\end{enumerate}
\end{theorem}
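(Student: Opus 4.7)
The plan is to use the preceding theorem, which already identifies, for any $S$-act $A$ having a divisible subact, its largest divisible subact $D_A$ as the unique $\D$-cover. This makes the equivalence $(1)\Leftrightarrow(2)$ essentially automatic: if every $S$-act has a $\D$-cover, then in particular each act has a divisible subact (namely the image of the cover, or equivalently $D_A$ itself), and conversely, if every act has a divisible subact, then $D_A$ exists for each $A$ and is a $\D$-cover by the previous theorem.

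For $(2)\Rightarrow(3)$, I would simply apply $(2)$ to the right $S$-act $S$ itself. A subact of $S$ (as a right $S$-act) is precisely a right ideal of $S$, so the existence of a divisible subact of $S$ gives a divisible right ideal $K$.

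The only step that requires a small construction is $(3)\Rightarrow(2)$. Given a divisible right ideal $K$ and an arbitrary $S$-act $A$, pick any $a\in A$ and consider $aK=\{ak : k\in K\}$. This is nonempty, and it is closed under the $S$-action because $K$ is a right ideal: $(ak)s=a(ks)\in aK$. To see that $aK$ is divisible, let $ak\in aK$ and let $c\in S$ be left cancellative. Since $K$ is divisible, there exists $k'\in K$ with $k=k'c$, and then $ak=a(k'c)=(ak')c$ with $ak'\in aK$. Hence $aK$ is a divisible subact of $A$, establishing $(2)$.

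The only conceptual point worth highlighting is the connection between subacts of $S$ and right ideals used in $(2)\Rightarrow(3)$; otherwise this is a routine consequence of the preceding theorem together with the observation that translating a divisible right ideal into any act via the action produces a divisible subact. There is no real obstacle.
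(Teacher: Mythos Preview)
Your proof is correct and follows essentially the same approach as the paper: the equivalence $(1)\Leftrightarrow(2)$ is immediate from the preceding theorem, and $(2)\Leftrightarrow(3)$ goes by specializing to $S$ and then translating a divisible right ideal into an arbitrary act. The only cosmetic difference is that for $(3)\Rightarrow(2)$ the paper takes the subact $XK$ of an act $X$, whereas you pick a single element $a\in A$ and use $aK$; both are divisible subacts for the same reason, and your choice is in fact a subact of theirs.
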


\begin{proof}
The equivalence of $(1)$ and $(2)$ are obvious by the last theorem. \\
If every $S-$act has a divisible subact then clearly $S$ has a divisible subact, which is a right ideal. Conversely if $K$ is a divisible subact of $S$, then given any $S-$act $X$, it has a divisible subact $XK$. Hence (2) and (3) are equivalent.
\end{proof}

For example, if $S$ is any monoid with a left zero, $z$ then $K=\{z\}$ is a divisible right ideal of $S$ and so every $S-$act has a $\D-$cover.

\smallskip

Notice that not every $S-$act has a $\D-$cover. For example, let $S=(\n,+)$ and consider $S$ as an $S-$act over itself. For every $n \in S$, $n+1$ is a left cancellative element in $S$, but there does not exist $m \in S$ such that $n=m+(n+1)$.Therefore $S$ does not have have any divisible right ideals.

\section{Torsion free covers}

In 1963 Enochs proved that over an integral domain, every module has a torsion free cover \cite{enochs-63}.
We give a proof of the semigroup analogue of Enochs' result that over a right cancellative monoid, every right act has a torsion free cover.

If $A \in \T$, then clearly $B \in \T$ for every subact $B \subseteq A$.

\begin{remark} \label{tf-homsets}
Clearly $S \in \T$ and so for every $S-$act $A$, there exists $X \in \T$ such that Hom$(X,A)\neq \emptyset$.
\end{remark}

\begin{lemma} \label{tf-coproducts}
$\dot\bigcup_{i \in I}A_i \in \T$ if and only if $A_i \in \T$ for each $i \in I$.
\end{lemma}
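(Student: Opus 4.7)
The plan is to handle the two implications separately, both of which rest on the basic observation that in a coproduct (disjoint union) of $S$-acts each summand $A_i$ is itself an $S$-subact and the $S$-action respects the partition into summands.

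For the forward direction, suppose $\dot\bigcup_{i \in I}A_i \in \T$. Each $A_i$ embeds as a subact, and I would invoke the remark already noted in the excerpt that subacts of torsion free acts are torsion free. This finishes that direction in one line.

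For the converse, suppose every $A_i$ is torsion free. Take $x, y \in \dot\bigcup_{i \in I}A_i$ and a right cancellative $c \in S$ with $xc = yc$. Say $x \in A_i$ and $y \in A_j$; then $xc \in A_i$ and $yc \in A_j$, so the equality $xc = yc$ forces $i = j$ since the summands are disjoint. Now both $x$ and $y$ lie in the same $A_i$, and torsion freeness of $A_i$ gives $x = y$.

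There is no real obstacle here: the argument is essentially bookkeeping about how the $S$-action on a coproduct stays within its summand, combined with the definition of torsion freeness. The proof should be no more than a few lines.
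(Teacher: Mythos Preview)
Your proposal is correct and essentially identical to the paper's proof: for the converse the paper observes that $xc=yc$ forces $x$ and $y$ into the same component and then applies torsion freeness of that $A_i$, while for the forward direction it also just uses that each $A_i$ is a subact of the coproduct.
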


\begin{proof}
Let $A=\dot\bigcup_{i \in I}A_i$ and suppose $A_i$, $i \in I$ are torsion free $S-$acts. Let $xc=yc$ for some $x,y \in A$, where $c$ is a right cancellative element of $S$. The equality $xc=yc$ implies $x$ and $y$ are in the same connected component, so there exists some $i \in I$ such that $x,y \in A_i$. Since $A_i$ is torsion free, $x=y$ and $A$ is torsion free. Conversely each $A_i$ is a subact of $A$ and so if $A$ is torsion free, each $A_i$, $i \in I$ are torsion free.
\end{proof}

\begin{lemma} \label{tf-direct-limit}
$\T$ is closed under directed colimits.
\end{lemma}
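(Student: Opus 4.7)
My plan is to verify the torsion free condition directly on an explicit construction of the directed colimit in the category of $S$-acts.

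First I would recall the standard description: for a directed system $(X_i, \phi_{i,j})_{i \in I}$, the colimit $(X, \alpha_i)$ can be realised as the quotient of $\dot\bigcup_{i \in I} X_i$ by the equivalence relation where $x_i \in X_i$ and $x_j \in X_j$ represent the same element of $X$ iff there exists some $k \geq i, j$ with $\phi_{i,k}(x_i) = \phi_{j,k}(x_j)$; the action of $S$ on $X$ is inherited coordinatewise, and each $\alpha_i$ sends $x_i$ to its equivalence class. The compatibility $\alpha_k \phi_{i,k} = \alpha_i$ holds.

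Next I would take $x, y \in X$ and a right cancellative $c \in S$ with $xc = yc$, and prove $x = y$. Using directedness of $I$, I can find a single index $i$ and elements $a, b \in X_i$ with $x = \alpha_i(a)$ and $y = \alpha_i(b)$ (by pushing both representatives forward to a common upper bound). Then $\alpha_i(ac) = xc = yc = \alpha_i(bc)$, so by the description of the colimit there exists $k \geq i$ with $\phi_{i,k}(ac) = \phi_{i,k}(bc)$, i.e.\ $\phi_{i,k}(a)c = \phi_{i,k}(b)c$ in $X_k$. Since $X_k \in \T$ and $c$ is right cancellative, this forces $\phi_{i,k}(a) = \phi_{i,k}(b)$, and applying $\alpha_k$ yields $x = \alpha_k\phi_{i,k}(a) = \alpha_k\phi_{i,k}(b) = y$.

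There is no real obstacle here: the only mild subtlety is ensuring both representatives live in a common $X_i$ before applying the torsion free hypothesis, but directedness of $I$ handles this immediately. The argument is parallel to Lemma~\ref{tf-coproducts} in spirit, replacing the connected component argument with the colimit identification.
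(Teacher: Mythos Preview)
Your proof is correct and follows essentially the same approach as the paper: both verify the torsion free condition directly on the explicit description of the directed colimit, pushing representatives of $x$ and $y$ forward to a common index where the torsion free hypothesis can be applied. The only cosmetic difference is that you first move both representatives to a single $X_i$ before finding the $k$ witnessing equality, whereas the paper keeps them at separate indices $i,j$ and finds $k\ge i,j$ in one step; the content is identical.
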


\begin{proof}
Let $(A_i,\phi_{i,j})$ be a direct system of torsion free $S-$acts over a directed index set $I$ with directed colimit $(A,\alpha_i)$. Assume $xc=yc$ where $c$ is a right cancellative element in $S$ and $x,y \in A$. Then there exists $x_i \in A_i$ and $y_j \in A_j$ with $x=\alpha_i(x_i)$, $y=\alpha_j(y_j)$. So $\alpha_i(x_i)c=\alpha_i(x_ic)=\alpha_j(y_jc)=\alpha_j(y_j)c$ and since $I$ is directed, by \cite[Lemma 2.1]{bailey-12} there exists some $k \geq i,j$ such that $\phi_{i,k}(x_i)c=\phi_{i,k}(x_ic)=\phi_{j,k}(y_jc)=\phi_{j,k}(y_j)c$. Since $A_k$ is torsion free $\phi_{i,k}(x_i)=\phi_{j,k}(y_j)$ and $x=\alpha_k\phi_{i,k}(x_i)=\alpha_k\phi_{j,k}(y_j)=y$ as required.
\end{proof}

\begin{theorem}
Let $S$ be a right cancellative monoid, then every $S-$act has a $\T$-cover.
\end{theorem}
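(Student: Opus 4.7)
The plan is to derive the result from Proposition~\ref{lambda-skeleton-proposition} followed by Theorem~\ref{precover-cover-theorem}. Condition~(1) of Proposition~\ref{lambda-skeleton-proposition}---that $\dot\bigcup_{i\in I} X_i \in \T$ if and only if each $X_i\in\T$---is exactly Lemma~\ref{tf-coproducts}, and condition~(2)---the existence, for every $S$-act $A$, of some $X\in\T$ with $\mathrm{Hom}_S(X,A)\neq\emptyset$---is Remark~\ref{tf-homsets}. So the only hypothesis that needs genuine work is condition~(3): exhibiting a cardinal $\lambda$ such that $|X|<\lambda$ for every indecomposable $X\in\T$.

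For this bound I would first note that since $S$ is right cancellative, \emph{every} element of $S$ is right cancellative, so torsion freeness of $X$ amounts to $xc=yc \Rightarrow x=y$ for every $c\in S$. In particular, given $z\in X$ and $t\in S$, the equation $yt=z$ admits at most one solution $y\in X$. I would then fix a base point $x_0$ in an indecomposable $X\in\T$ and invoke the standard zigzag characterisation of indecomposability: every $y\in X$ is connected to $x_0$ by a finite sequence $x_0=z_0, z_1,\ldots, z_n=y$ together with pairs $(s_i,t_i)\in S\times S$ satisfying $z_i s_i = z_{i+1} t_i$. By the uniqueness just established, $z_{i+1}$ is determined by $z_i$ and the pair $(s_i,t_i)$, hence the number of elements reachable from $x_0$ by an $n$-step zigzag is at most $|S|^{2n}$. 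Summing over $n$ yields $|X|\leq |S|+\aleph_0$, so $\lambda:=(|S|+\aleph_0)^+$ works.

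With all three conditions verified, Proposition~\ref{lambda-skeleton-proposition} produces a $\T$-precover for every $S$-act, and Theorem~\ref{precover-cover-theorem} combined with the directed colimit closure from Lemma~\ref{tf-direct-limit} then upgrades this to a $\T$-cover. The main obstacle is the cardinality estimate, and it is precisely where right cancellativity is essential: if some $c\in S$ failed to be right cancellative then torsion freeness would impose no constraint on solutions of $yc=z$, and the zigzag could branch without bound at that step, destroying the uniform bound on indecomposable torsion free acts.
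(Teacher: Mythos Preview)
Your proof is correct and follows the same route as the paper: verify the three hypotheses of Proposition~\ref{lambda-skeleton-proposition} (using Lemma~\ref{tf-coproducts} and Remark~\ref{tf-homsets} for the first two), establish the cardinal bound on indecomposable torsion free acts via the zigzag description together with the uniqueness of solutions to $yt=z$ coming from right cancellativity, and then pass from precover to cover using Lemma~\ref{tf-direct-limit} and Theorem~\ref{precover-cover-theorem}. The only difference is that your cardinality estimate $|X|\le |S|+\aleph_0$ is sharper than the paper's $|X|\le |S|^{\aleph_0}$; this is because you observe that for infinite $S$ one has $|S|^{2n}=|S|$ for each finite $n$, whereas the paper simply bounds the total by the cruder $|S|^{\aleph_0}$, but either bound suffices for the application.
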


\begin{proof}
Let $A$ be an indecomposable torsion free $S-$act. For each $xs=x's \in A$, $s \in S$, since $s$ is right cancellative, $x=x'$. Hence for each $x \in A$, $s \in S$ there is no more than one solution to $x=ys$. Now let $x,y \in A$ be any two elements. Since $A$ is indecomposable there exist $x_1,\ldots,x_n \in A$, $s_1,\ldots,s_n,t_1,\ldots, t_n \in S$ such that $x=x_1s_1$, $x_1t_1=x_2s_2$, \ldots, $x_nt_n=y$, as shown below.

\begin{center}
\begin{tikzpicture}[auto, outer sep=3pt, node distance=2cm,>=latex']
\node (x) {$x$};
\node[below right of = x] (x1) {$x_1$};
\node[above right of = x1] (y1) {$\bullet$};
\node[below right of = y1] (x2) {$x_2$};
\node[above right of = x2] (y2) {$\bullet$};
\node[below right of = y2] (xn) {$x_n$};
\node[above right of = xn] (y) {$y$};
\draw[->,thick] (x1) --  node {$s_1$} (x);
\draw[->,thick] (x1) --  node [swap] {$t_1$} (y1);
\draw[->,thick] (x2) --  node {$s_2$} (y1);
\draw[->,thick] (x2) --  node [swap] {$t_2$} (y2);
\draw[->,dotted,thick] (xn) -- node {} (y2);
\draw[->,thick] (xn) --  node [swap] {$t_n$} (y);
\end{tikzpicture}
\end{center}

If we can show there is a bound on the number of such paths, then there is a bound on the number of elements in $A$. Now, by the previous argument, there are only $|S|$ possible $x_1 \in A$ such that $x=x_1s_1$ for some $s_1 \in S$. In a similar manner, given $x_1$ there are only $|S|$ possible $x_1t_1$ for some $t_1 \in S$. Continuing in this fashion we see that the number of such paths of length $n \in \n$ is bounded by $|S|^{2n}$, and so $|A| \leq |S|^{\aleph_0}$. So by Remark \ref{tf-homsets}, Lemma \ref{tf-coproducts} and Proposition~\ref{lambda-skeleton-proposition} every $S-$act has a $\T-$precover. By Lemma \ref{tf-direct-limit} and Theorem~\ref{precover-cover-theorem} every $S-$act has a $\T-$cover.
\end{proof}

It was shown in \cite[Corollary 2.2]{bulman-fleming-90a} that over a right cancellative monoid, an act is torsion free if and only if it is principally weakly flat, so we get the following corollary.

\begin{corollary}
Every act over a right cancellative monoid has a $\PWF$-cover.
\end{corollary}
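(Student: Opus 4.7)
The plan is simply to combine the preceding theorem with the cited equivalence from \cite{bulman-fleming-90a}. Since $S$ is right cancellative, the result of Bulman-Fleming gives that the class $\T$ of torsion free $S$-acts coincides exactly with the class $\PWF$ of principally weakly flat $S$-acts. So the two classes are literally equal as subcategories of $S$-acts.

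Given this equality, every $\T$-cover is automatically a $\PWF$-cover and vice versa, because the defining universal property of an $\X$-cover refers only to the membership of objects in the class $\X$, and both classes contain exactly the same acts. The preceding theorem has just established that every $S$-act has a $\T$-cover when $S$ is right cancellative, so reinterpreting those same covers as $\PWF$-covers finishes the proof.

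There is no real obstacle here; the corollary is a one-line consequence of the theorem together with the quoted classification. I would simply write: by \cite[Corollary 2.2]{bulman-fleming-90a} the classes $\T$ and $\PWF$ coincide over a right cancellative monoid, hence by the previous theorem every $S$-act has a $\PWF$-cover.
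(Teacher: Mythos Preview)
Your proposal is correct and matches the paper's own reasoning exactly: the corollary is obtained immediately from the preceding theorem together with \cite[Corollary 2.2]{bulman-fleming-90a}, which identifies $\T$ with $\PWF$ over a right cancellative monoid. There is nothing to add.
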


\section{Weakly Torsion Free Acts and CRM monoids}\label{weak-torsion-section}

An $S-$act $A$ is called {\em weakly torsion free} if for any $x,y \in A$, and for any cancellative element $c \in S$, $xc=yc$ implies $x=y$. This is the definition of torsion free given in \cite{feller-69} and it is clear that every torsion free right $S-$act is weakly torsion free. We shall denote the class of weakly torsion free right $S-$acts by $\WT$. We are motivated in this section by some of the results presented in~\cite{enochs-63}.

\begin{lemma}\label{t-precover-subact-lemma}
Suppose that $A$ is a right $S-$act and that $\psi:C\to A$ is a $\WT-$precover. If $B\subseteq A$ is an $S-$subact of $A$ and if $D=\psi^{-1}(B)$ then $\psi|_D:D\to B$ is a $\WT-$precover of $B$.
\end{lemma}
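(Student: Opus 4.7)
The plan is to unwind the definitions and then rely on the precover property of $\psi$ applied to a map composed with the inclusion $\iota:B\hookrightarrow A$.

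First I would verify the basic structural claims about $D$. Since $B$ is a subact of $A$, the preimage $D=\psi^{-1}(B)$ is a subact of $C$: if $d\in D$ and $s\in S$ then $\psi(ds)=\psi(d)s\in B$ because $B$ is closed under the $S$-action. Next, $D$ lies in $\WT$: the domain $C$ of a $\WT$-precover is in $\WT$ by definition, and the class $\WT$ is clearly closed under subacts (if $xc=yc$ in $D$ with $c$ cancellative, then the same equality holds in $C$, giving $x=y$). Hence $\psi|_D:D\to B$ is a well-defined $S$-map with $D\in\WT$, so it is a candidate for a $\WT$-precover.

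For the precover property, let $g':P'\to B$ be any $S$-map with $P'\in\WT$. Composing with $\iota$ gives an $S$-map $\iota g':P'\to A$, and since $\psi:C\to A$ is a $\WT$-precover of $A$ there exists an $S$-map $f:P'\to C$ with $\psi f=\iota g'$. I claim $f$ lands in $D$: for every $p\in P'$,
$$\psi(f(p))=\iota(g'(p))=g'(p)\in B,$$
so $f(p)\in\psi^{-1}(B)=D$. Corestricting $f$ to $D$ then gives an $S$-map $f:P'\to D$ satisfying $(\psi|_D)f=g'$, as required.

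There is no real obstacle — the argument is essentially a diagram chase. The only subtlety worth noting is that the statement of precover does not automatically guarantee the precover factorisation lifts to a subact, so the key observation is simply that once $\psi f$ takes values in $B$, every individual value $f(p)$ must lie in $\psi^{-1}(B)$, which is exactly $D$.
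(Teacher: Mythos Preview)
Your proof is correct and follows essentially the same approach as the paper's: both observe that $D$ is weakly torsion free as a subact of $C$, then lift any $f:X\to B$ through the $\WT$-precover $\psi$ and note that the resulting map into $C$ necessarily has image contained in $D=\psi^{-1}(B)$. Your write-up is simply more explicit about the inclusion $\iota$ and the verification that $D$ is a subact.
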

\begin{proof} 
Since $S-$subacts of weakly torsion free acts are weakly torsion free then $D$ is weakly torsion free. Suppose then that $X\in\WT$ and $f:X\to B$ is an $S-$map. Then clearly there exists $g:X\to C$ with $\psi g=f$. It is also clear that $\im(g)\subseteq D$ and the result follows.
\end{proof}

It is clear that the previous lemma is also true if we replace $\WT$ by $\T$.

Let $X$ be an $S-$act and consider the {\em weak torsion relation}
$$
\sigma_X=\{(x,y)\in X\times X| xc=yc\text{ for some cancellative }c\in S\}.
$$

\medskip

In~\cite{feller-69} a monoid is said to satisfy the {\em Common Right Multiple Condition} (CRM) if for all $s,c\in S$ with $c$ cancellative there exists $t,d\in S$ with $d$ cancellative such that $sd=ct$. For example, every monoid in which the cancellative elements form a group, every commutative monoid and every left reversible cancellative monoid is a CRM monoid.
If $S$ is a CRM monoid and $X$ an $S-$act then $\sigma_X$ is a congruence on $X$. To see this note that $\sigma_X$ is obviously reflexive and symmetric. Suppose then that $(x,y), (y,z)\in \sigma_X$. Then $xc_1=yc_1$ and $yc_2=zc_2$ for some cancellative $c_1, c_2 \in S$. Then there exists $t,d\in S$ with $d$ cancellative such that $c_1d=c_2t$ and so
$$
xc_1d = yc_1d = yc_2t = zc_2t = zc_1d
$$
and since $c_1d$ is cancellative then $(x,z)\in\sigma_X$ and $\sigma_X$ is an equivalence. Suppose now that $xc=yc$ with $c$ cancellative and suppose also that $s\in S$. Then there exists $d,t\in S$ with $d$ cancellative such that $sd=ct$. Hence $(xs)d = xct = yct = (ys)d$ and so $(xs,ys)\in\sigma_X$ and $\sigma_X$ is a congruence.

\begin{lemma}\label{injective-envelope-torsion-lemma}
Let $S$ be a CRM monoid. If $X$ is a weakly torsion free right $S-$act and if $E(X)$ is the injective envelope of $X$ then $E(X)$ is also weakly torsion free.
\end{lemma}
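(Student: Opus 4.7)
The plan is to exploit the essentiality of the embedding $X\hookrightarrow E(X)$ together with the fact, just established in the preamble, that over a CRM monoid the weak torsion relation $\sigma_Y$ is a congruence on every $S$-act $Y$.

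First I would form the weak torsion relation $\sigma_{E(X)}$ on $E(X)$. Since $S$ is CRM, the argument immediately preceding the lemma shows that $\sigma_{E(X)}$ is a congruence on $E(X)$. Observing that $E(X)$ is weakly torsion free is the same as showing that $\sigma_{E(X)}$ is the identity congruence on $E(X)$.

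Next I would check that the restriction $\sigma_{E(X)}\cap (X\times X)$ is the identity on $X$. Indeed, if $x,y\in X$ are $\sigma_{E(X)}$-related then there is a cancellative $c\in S$ with $xc=yc$ in $E(X)$; but this equality takes place in the subact $X$, and $X$ is weakly torsion free by hypothesis, so $x=y$.

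Finally I would invoke the standard fact that the injective envelope $X\hookrightarrow E(X)$ is an essential monomorphism in the category of $S$-acts, i.e.\ any congruence on $E(X)$ whose restriction to $X$ is trivial must itself be trivial. Applying this to the congruence $\sigma_{E(X)}$ forces $\sigma_{E(X)}=\mathrm{id}_{E(X)}$, which is precisely the conclusion that $E(X)$ is weakly torsion free.

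The only subtle point, and the main thing to keep in mind, is the appeal to essentiality in the last step; everything else is a direct application of the transitivity argument used to establish that $\sigma_X$ is a congruence on CRM monoids. The rest of the proof is purely formal.
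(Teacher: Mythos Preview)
Your argument is correct and is essentially the same as the paper's: the paper phrases essentiality via the composite $X\to E(X)\to E(X)/\sigma_{E(X)}$ being a monomorphism (forcing the quotient map to be monic), while you phrase it via the equivalent condition that a congruence on $E(X)$ restricting trivially to $X$ must be trivial. Both amount to showing $\sigma_{E(X)}\cap(X\times X)=1_X$ (using that $X$ is weakly torsion free) and then invoking essentiality of the injective envelope to conclude $\sigma_{E(X)}=1_{E(X)}$.
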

\begin{proof}
Consider the composite $X\to E(X)\to E(X)/\sigma_{E(X)}$. Since both $X$ and $E(X)/\sigma_{E(X)}$ are weakly torsion free then this composite is a monomorphism. Hence so is the map $E(X)\to E(X)/\sigma_{E(X)}$ and so $E(X)\cong E(X)/\sigma_{E(X)}$ and $E(X)$ is weakly torsion free.
\end{proof}

\begin{lemma}\label{torsion-free-injective-lemma}
Let $S$ be a CRM monoid and let $\X=\WT\cap\D$. If an injective $S-$act has an $\X-$precover then it has a $\WT-$precover.
\end{lemma}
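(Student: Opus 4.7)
The plan is to show that the given $\X$-precover is already a $\WT$-precover; no new object needs to be built. So let $Q$ be an injective $S$-act and let $\psi : C \to Q$ be an $\X$-precover. Note that $C \in \WT$ since $\X = \WT \cap \D \subseteq \WT$, so the only thing to check is the factorisation property with respect to all weakly torsion free acts.

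First I would take an arbitrary $X \in \WT$ together with an $S$-map $f : X \to Q$, and pass to the injective envelope $\iota : X \hookrightarrow E(X)$. The key observation is that $E(X)$ lies in $\X$: by Lemma~\ref{injective-envelope-torsion-lemma} (this is where the CRM hypothesis enters) $E(X)$ is weakly torsion free, and since $E(X)$ is injective we have $E(X) \in \I \subseteq \D$, so $E(X) \in \WT \cap \D = \X$.

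Next I would use the injectivity of $Q$ to extend $f$ along the monomorphism $\iota$ to a map $\bar f : E(X) \to Q$ with $\bar f \iota = f$. Since $\psi : C \to Q$ is an $\X$-precover and $E(X) \in \X$, there exists $g : E(X) \to C$ with $\psi g = \bar f$. Restricting along $\iota$ gives an $S$-map $g\iota : X \to C$ satisfying
$$
\psi(g\iota) = (\psi g)\iota = \bar f \iota = f,
$$
which is exactly the required factorisation of $f$ through $\psi$. As $X$ and $f$ were arbitrary, this shows $\psi : C \to Q$ is a $\WT$-precover of $Q$.

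The only nontrivial ingredient is the embedding $X \hookrightarrow E(X) \in \X$, and this is the step where the two hypotheses of the lemma genuinely bite: the CRM assumption is needed to keep weak torsion-freeness while passing to the injective envelope, and we use that $\X$ contains all injective weakly torsion free acts. I do not anticipate any real obstacle beyond this observation.
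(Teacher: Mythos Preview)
Your proof is correct and follows essentially the same route as the paper: both arguments pass from an arbitrary $X\in\WT$ to its injective envelope $E(X)$, use Lemma~\ref{injective-envelope-torsion-lemma} together with $\I\subseteq\D$ to place $E(X)$ in $\X$, extend the given map along $X\hookrightarrow E(X)$ by injectivity of the target, and then factor through the $\X$-precover. The paper's version is just terser and leaves the membership $E(X)\in\X$ implicit.
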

\begin{proof}
Let $E$ be an injective $S-$act and let $g:C\to E$ be an $\X-$precover. Given any $X \in \WT$ and $h:X \to E$ there exists $\phi:E(X) \to E$ such that $\phi|_X=h$. Since $C$ is an $\X-$precover and $E(X) \in \X$ there exists $f:E(X) \to C$ such that $gf=\phi$. Hence $gf|_X=h$ and $C$ is a $\WT-$precover of $E$.
\end{proof}

Let $S$ be a CRM semigroup and let $C$ be the submonoid of cancellative elements of $S$. Let $X$ be a right $S-$act and on $X\times C$ define a relation by
$$
\rho_X=\{((x,c),(y,d)) | \exists s,t\in S, xs=yt, cs=dt\in C\}.
$$
Notice that since $S$ satisfies the CRM property then there exists $s_1\in S, c_1\in C$ with $(dt)s_1=dc_1$ and so $ts_1=c_1$. Consequently $x(ss_1) = yc_1$ and $c(ss_1) = dc_1$. Hence we see that
$$
\rho_X=\{((x,c),(y,d)) | \exists s\in S, t\in C, xs=yt, cs=dt\}.
$$
Then it is an easy matter to show that $\rho_X$ is an equivalence on $X\times C$. Let $Q_X=(X\times C)/\rho_X$ and let $\rho=\rho_S$ be the respective equivalence on $S\times C$. Let $Q=Q_S$ and define a map $Q_X\times Q\to Q_X$ as follows.
$$
\left((x,c)\rho_X\right)\left((s,d)\rho\right)=(xs_1,dc_1)\rho_X\text{ where }cs_1=sc_1\text{ for }s_1\in S, c_1\in C.
$$
First notice that such an $s_1,c_1$ exist by the CRM property. Second, suppose that $cs_2=sc_2$ for $s_2\in S, c_2\in C$. By the CRM property, there exists $c_3\in C, s_3\in S$ such that $c_2c_3=c_1s_3$. Hence
$$
cs_2c_3=sc_2c_3=sc_1s_3=cs_1s_3
$$
and so $s_2c_3=s_1s_3$ as $c$ is cancellative. Therefore $(xs_1)s_3 = (xs_2)c_3$. In addition $(dc_1)s_3=(dc_2)c_3$ and so $((xs_1,dc_1),(xs_2,dc_2))\in\rho_X$ and the choice of $s_1, c_1$ is not important.
It is then a straightforward matter to demonstrate that this map is well-defined, the details being left to the interested reader.

It is also easy to show that if $(x,c)\rho_X\in Q_X, (s,d)\rho, (t,e)\rho\in Q$ then
$$
\left((x,c)\rho_X(s,d)\rho\right)(t,e)\rho=(x,c)\rho_X\left((s,d)\rho(t,e)\rho\right)
$$
and that $(x,c)\rho_X(1,1)\rho = (x,c)\rho_X$. It then follows that $Q$ is a semigroup and since $(1,1)\rho(s,t)\rho = (s,t)\rho$ then $Q$ is a monoid with identity $(1,1)\rho$. It is also easy to demonstrate that for all $c,d\in C,s\in S, (s,d)\rho(d,c)\rho = (s,c)\rho$, that $(c,c)\rho=(1,1)\rho$ and that $U=\{(c,d)\rho|c,d\in C\}$ is the group of units.

\smallskip

Finally the map $\iota:S\to Q$ given by $\iota(s)=(s,1)\rho$ is a monoid monomorphism. If $c\in C$ then $\iota(c)=(c,1)\rho\in U$ with inverse $(1,c)\rho$. We can then think of elements of $Q$ as being of the form $sc^{-1}$ for $s\in S, c\in C$.

The monoid $Q$ is a generalisation of the construction given in~\cite[Exercise 12.4.2, page 302]{clifford-67} and can also be found in~\cite{feller-69}. It is called a {\em (classical) monoid of right quotients of $S$ by $C$}.

\medskip

It also follows from the argument above that $Q_X$ is a right $Q-$act and hence a right $S-$act. Notice that the $S-$action is given by
$$
(x,c)\rho_X\cdot s = (x,c)\rho_X(s,1)\rho = (xs_1,c_1)\rho_X\text{ where }cs_1=sc_1.
$$
We shall call $Q_X$ the {\em act of quotients of $X$ by C}.

It is also worth noting that $(x,c)\rho_X\cdot c = (x,1)\rho_X$. Consequently if there exists $x,y\in X, s,t\in S$ such that $xs=yt$ then $(x,c)\rho_X\cdot (cs) = (y,d)\rho_X\cdot (dt)$ for any $c,d\in C$ and so we can deduce that

\begin{lemma}\label{qx-indecomposable-lemma}
Let $S$ be a CRM monoid. Then $X$ is an indecomposable $S-$act if and only if $Q_X$ is an indecomposable $Q-$act.
\end{lemma}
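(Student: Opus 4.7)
The plan is to reduce indecomposability to a statement about connected components and exploit the canonical map $\iota_X\colon X\to Q_X$, $x\mapsto(x,1)\rho_X$. A preliminary step is to verify that $\iota_X$ is an $S$-homomorphism. Taking $c=1$ in the $S$-action formula from the excerpt, the CRM requirement $cs_1=sc_1$ is solved by $c_1=1,s_1=s$, which gives $\iota_X(x)\cdot s = (xs,1)\rho_X = \iota_X(xs)$.

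For the forward direction, suppose $X$ is $S$-indecomposable and pick arbitrary $(x,c)\rho_X,(y,d)\rho_X\in Q_X$. The identity $(x,c)\rho_X\cdot c = (x,1)\rho_X$ noted in the excerpt shows that each of these elements is $S$-connected (hence also $Q$-connected, since $S\subseteq Q$) to an element of the image of $\iota_X$. Any $S$-zigzag witnessing that $x$ and $y$ lie in the same component of $X$ maps under the $S$-homomorphism $\iota_X$ to an $S$-zigzag in $Q_X$ joining $\iota_X(x)$ to $\iota_X(y)$. Concatenation then yields a $Q$-zigzag from $(x,c)\rho_X$ to $(y,d)\rho_X$, so $Q_X$ is $Q$-indecomposable.

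For the converse I would argue by contrapositive. Given a non-trivial decomposition $X=X_1\dot\cup X_2$, I claim $Q_X = Q_{X_1}\dot\cup Q_{X_2}$ as $Q$-acts. The sets $Q_{X_1},Q_{X_2}$ are disjoint because any $\rho_X$-relation $((x,c),(y,d))$ is witnessed by some $xs=yt$ with $s,t\in S$; if $x\in X_1$ and $y\in X_2$ this would force the common element $xs=yt$ to lie in both $X_1$ and $X_2$, which is impossible. Each $Q_{X_i}$ is a $Q$-subact because the $Q$-action $(x,c)\rho_X\cdot(s,d)\rho = (xs_1,dc_1)\rho_X$ keeps the first coordinate in the same $S$-component of $X$, and both are non-empty since $1\in C$. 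Hence $Q_X$ is $Q$-decomposable.

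The main conceptual obstacle, the well-definedness of $\rho_X$ and the $Q$-action via the CRM condition, is already discharged in the excerpt; what remains is the routine observation that in both implications connected components are simply tracked through $\iota_X$ and through the action by elements of $C$.
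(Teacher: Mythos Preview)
Your argument is correct and matches the paper's approach: for the forward direction the paper writes out exactly the zigzag you obtain by pushing an $S$-zigzag through $\iota_X$ and bookending with the relations $(x,c)\rho_X\cdot c=(x,1)\rho_X$ and $(y,d)\rho_X\cdot d=(y,1)\rho_X$. For the converse the paper merely says it ``follows in a similar way'', so your contrapositive via $Q_X=Q_{X_1}\dot\cup Q_{X_2}$ is in fact more explicit than what appears there.
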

\begin{proof} Suppose $X$ is an indecomposable $S-$act and let $(x,c)\rho_X, (y,d)\rho_X\in Q_X$. Then there exists $x_2,\ldots, n_x\in X$, $s_1, \ldots s_n, t_2,\ldots, t_{n+1}\in S$ such that
$$
\begin{array}{rcl}
xs_1&=&x_2t_2\\
x_2s_2&=&x_3t_3\\
&\ldots&\\
x_ns_n&=&yt_{n+1}.\\
\end{array}
$$
Hence
$$
\begin{array}{rcl}
(x,c)\rho_X\cdot cs_1&=&(x_2,c)\rho_X\cdot ct_2\\
(x_2,c)\rho_X\cdot cs_2&=&(x_3,c)\rho_X\cdot ct_3\\
&\ldots&\\
(x_n,c)\rho_X\cdot cs_n&=&(y,d)\rho_X\cdot dt_{n+1}\\
\end{array}
$$
and so $Q_X$ is indecomposable.

The converse follows in a similar way.
\end{proof}
Since the cancellative elements of $Q$ are invertible then clearly every $Q-$act is weakly torsion free and so in particular $Q_X$ is weakly torsion free.
Notice that $Q_X$ is actually weakly torsion free as an $S-$act for every $S-$act $X$. 

\smallskip

Consider now the map $\theta:X\to Q_X$ given by $\theta(x)=(x,1)\rho_X$. Then $\theta(xs) = (xs,1)\rho_X = (x,1)\rho_X\cdot s=\theta(x)s$ and $\theta$ is an $S-$map. Also, if $\theta(x) = \theta(y)$ then $(x,1)\rho_X(y,1)$ and so there exists $s\in S, c\in C$ with $xs=yc$ and $1s=1c$ and so $xc=yc$. Consequently, $X$ is weakly torsion free if and only if $\theta$ is an $S-$monomorphism.

\smallskip

Suppose that in addition $X$ is also divisible. Then $X\to Q_X$ splits. To see this define $\phi:Q_X\to X$ as follows. Given $(x,c)\rho_X\in Q_X$ let $y\in X$ be the unique element such that $x=yc$ and define $\phi((x,c)\rho_X) = y$. Then it is straightforward to show that $\phi$ is a well-defined $S-$map and that $\phi\theta=1_X$. So for all $x\in X, c\in C$ we have
$$
x=\phi\theta(x) = \phi((x,1)\rho_X) = \phi((x,c)\rho_X\cdot c)=\phi((x,c)\rho_X)c.
$$

Suppose now that $\phi((x,c)\rho_X)=\phi((y,d)\rho_X)$. Then $x=\phi((x,c)\rho_X)\cdot c$ and $y=\phi((y,d)\rho_X)\cdot d$ and since $S$ satisfies the CRM property there exist $s\in S,t\in C$ such that $cs=dt$ and so $xs=\phi((x,c)\rho_X)\cdot cs=\phi((y,d)\rho_X)\cdot dt = yt$. Hence $\phi((x,c)\rho_X)=\phi((y,d)\rho_X)$ and so we deduce

\begin{lemma}\label{qxcongx-lemma}
Let $S$ be a CRM monoid and let $X$ be a weakly torsion free and divisible $S-$act. Then $Q_X\cong X$.
\end{lemma}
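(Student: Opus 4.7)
The plan is to assemble the groundwork laid in the paragraphs immediately preceding the lemma and show that the $S$-maps $\theta\colon X\to Q_X$ and $\phi\colon Q_X\to X$ are mutually inverse. Essentially all of the heavy lifting has already been done; what is needed is to verify well-definedness of $\phi$, exploit the identity $\phi\theta=1_X$ already established, and then show that $\phi$ is also injective.

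First I would recall that since $X$ is weakly torsion free, $\theta$ is an $S$-monomorphism (this was observed just above). Then I would carefully define $\phi\colon Q_X\to X$ by sending $(x,c)\rho_X$ to the unique $y\in X$ with $yc=x$: existence of $y$ uses divisibility of $X$, while uniqueness uses that $c$ is cancellative together with weak torsion freeness of $X$. Well-definedness on $\rho_X$-classes follows because if $xs=yt$ and $cs=dt$ for some $s\in S,\ t\in C$, writing $x=uc$ and $y=vd$ gives $ucs=vdt=vcs$ and cancellation of $cs$ (which lies in $C$) yields $u=v$. Checking that $\phi$ is an $S$-map is then routine: for any generator $r\in S$ and $(x,c)\rho_X$, one writes $cs_1=rc_1$ by CRM and verifies $\phi\bigl((x,c)\rho_X\cdot r\bigr)=\phi\bigl((xs_1,c_1)\rho_X\bigr)=\phi((x,c)\rho_X)\cdot r$ using cancellativity of $c_1$.

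The identity $\phi\theta=1_X$ is already recorded in the preceding paragraph, so in particular $\phi$ is surjective. The key remaining step — which is where the main work lies — is to show $\phi$ is injective. Suppose $\phi\bigl((x,c)\rho_X\bigr)=\phi\bigl((y,d)\rho_X\bigr)=u$, so that $x=uc$ and $y=ud$. By the CRM property applied to $c,d\in C$, pick $s\in S$ and $t\in C$ with $cs=dt$; then
\[
xs=ucs=udt=yt,\qquad cs=dt\in C,
\]
which is exactly the condition defining $\rho_X$, so $(x,c)\rho_X=(y,d)\rho_X$. (This is precisely the computation sketched immediately before the lemma, whose final sentence should conclude $(x,c)\rho_X=(y,d)\rho_X$ rather than restating the hypothesis.) Thus $\phi$ is an injective, surjective $S$-map, hence an $S$-isomorphism, and $Q_X\cong X$ as required.

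The only real obstacle is keeping the bookkeeping of the CRM witnesses straight; the potentially dangerous step is the well-definedness of $\phi$ on $\rho_X$-classes, where one must notice that after rewriting $\rho_X$ in the form with $t\in C$, the element $cs=dt$ is itself cancellative and hence can be cancelled in the equation $ucs=vcs$. Everything else is a routine diagram chase.
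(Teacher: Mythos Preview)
Your proposal is correct and follows essentially the same approach as the paper: establish $\phi\theta=1_X$ from divisibility and weak torsion freeness, then use a CRM witness to show $\phi$ is injective, exactly as in the paragraph preceding the lemma. You have also correctly identified (and repaired) the typo in the paper's concluding sentence, where the intended conclusion is $(x,c)\rho_X=(y,d)\rho_X$.
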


\smallskip
Consequently from Lemma~\ref{injective-envelope-torsion-lemma} and the well-known fact that injective acts are divisible, we get
\begin{corollary}
Let $S$ be a CRM monoid and let $X$ be a weakly torsion free $S-$act. Then $Q_{E(X)}\cong E(X)$.
\end{corollary}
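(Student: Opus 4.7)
The proof is an immediate application of Lemma~\ref{qxcongx-lemma} once we verify that $E(X)$ satisfies both hypotheses of that lemma, namely that $E(X)$ is weakly torsion free and divisible. So the plan is to assemble two facts that are already in hand.

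First, I would observe that $E(X)$ is divisible. This is the well-known fact (flagged in the statement of the corollary itself) that every injective $S-$act is divisible, so no further work is needed here.

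Next, I would invoke Lemma~\ref{injective-envelope-torsion-lemma} directly: since $S$ is a CRM monoid and $X$ is weakly torsion free, the injective envelope $E(X)$ is weakly torsion free. This is precisely the content of that lemma and requires no additional argument.

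With these two ingredients, $E(X)$ lies in $\WT\cap\D$, so Lemma~\ref{qxcongx-lemma} applied to the $S-$act $E(X)$ yields $Q_{E(X)}\cong E(X)$, which is the claim. There is no real obstacle here; the corollary is essentially a packaging of the preceding lemma with the standard implication ``injective $\Rightarrow$ divisible''.
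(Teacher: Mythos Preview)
Your proposal is correct and matches the paper's own justification exactly: the corollary is stated immediately after Lemma~\ref{qxcongx-lemma} with the remark that it follows from Lemma~\ref{injective-envelope-torsion-lemma} together with the well-known fact that injective acts are divisible. There is nothing to add.
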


\begin{proposition}
Let $S$ be a CRM monoid. The following are equivalent
\begin{enumerate}
\item for every $S-$act $X, X\cong Q_X$;
\item for every weakly torsion free right $S-$act $X, X\cong Q_X$;
\item for every torsion free right $S-$act $X, X\cong Q_X$;
\item $S\cong Q$;
\item the cancellative elements of $S$ form a group.
\end{enumerate}
\end{proposition}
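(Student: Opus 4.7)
The plan is to prove the cycle $(1)\Rightarrow(2)\Rightarrow(3)\Rightarrow(4)\Rightarrow(5)\Rightarrow(1)$, with the bulk of the content concentrated in the last two implications. The first two implications are immediate from the class inclusions $\T\subseteq\WT\subseteq\{\text{all }S\text{-acts}\}$. For $(3)\Rightarrow(4)$, observe that $S$ viewed as a right $S$-act is torsion free (this is literally the definition of right cancellativity of the cancellative elements), so applying (3) with $X=S$ yields $S\cong Q_S=Q$.

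For $(4)\Rightarrow(5)$, the key observation is that the cancellative elements of $Q$ coincide with the group of units $U$. Units are always cancellative; for the converse, any cancellative $q=(s,c)\rho\in Q$ factors as $\iota(s)\cdot(1,c)\rho$ with $(1,c)\rho\in U$, so $\iota(s)$ is cancellative in $Q$ (a product with a unit preserves cancellativity), and by injectivity of $\iota$ the element $s$ is cancellative in $S$, forcing $\iota(s)\in U$ and hence $q\in U$. Since $S\cong Q$ as monoids, and both cancellativity and invertibility are preserved under a monoid isomorphism, every cancellative element of $S$ is a unit. Combined with the fact that the inverse of a cancellative element is itself cancellative, this shows the cancellative elements of $S$ form a group.

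The real content is in $(5)\Rightarrow(1)$. For an arbitrary $S$-act $X$, I would show that the canonical $S$-map $\theta:X\to Q_X$, $x\mapsto(x,1)\rho_X$, is an isomorphism. For injectivity, $\theta(x)=\theta(y)$ unpacks to the existence of $s\in S,t\in C$ with $xs=yt$ and $1\cdot s=1\cdot t$, so $s=t\in C$ and $xc=yc$ for the cancellative element $c:=s=t$; applying $c^{-1}$, which exists in $S$ by (5), yields $x=y$. For surjectivity, given $(x,c)\rho_X\in Q_X$, the element $xc^{-1}\in X$ is well-defined by (5), and $\theta(xc^{-1})=(xc^{-1},1)\rho_X=(x,c)\rho_X$ (witnessed by $s=c$, $t=1$ in the defining relation, since $(xc^{-1})c=x\cdot 1$ and $1\cdot c=c\cdot 1$).

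I do not anticipate a deep obstacle, but two points need some care: the characterisation of cancellative elements of $Q$ as exactly the units in $(4)\Rightarrow(5)$, which should be spelt out carefully because it is the crux of that implication; and ensuring in $(5)\Rightarrow(1)$ that $c^{-1}$ really acts on an arbitrary $S$-act $X$ (which is automatic once $c^{-1}\in S$), so that the argument applies uniformly and is not circumscribed by a torsion-freeness assumption on $X$.
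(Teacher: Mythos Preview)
Your argument follows exactly the paper's cycle and rests on the same two facts: that the cancellative elements of $Q$ are precisely its units (for $(4)\Rightarrow(5)$), and that $(x,c)\rho_X=(xc^{-1},1)\rho_X$ once $c^{-1}\in S$ (for $(5)\Rightarrow(1)$); you have simply supplied the details the paper omits. One small slip worth tightening: in $(3)\Rightarrow(4)$ you obtain $S\cong Q$ only as right $S$-acts, yet in $(4)\Rightarrow(5)$ you invoke it as a monoid isomorphism---the paper is equally terse on this point, and the gap is easily closed (for instance, an $S$-act isomorphism forces $Q$ to be cyclic over $S$, and since right multiplication by $\iota(c)$ is bijective on $Q$ one deduces $Sc=S$, hence $c$ is invertible).
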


\begin{proof}
(1)$\Rightarrow$(2)$\Rightarrow$(3)$\Rightarrow$(4) are obvious. (4)$\Rightarrow$(5) since the cancellative elements of $Q$ are the units. (5)$\Rightarrow$(1) since $(x,c)\rho_X=(xc^{-1},1)\rho_X$ for all $(x,c)\in Q_X$.
\end{proof}
Notice that if the cancellative elements of $S$ form a group then every $S-$act is weakly torsion free and so has a weakly torsion free cover.

\medskip

The following is fairly obvious.
\begin{lemma}
Let $S$ be a CRM monoid and let $Q$ be its monoid of quotients. Then $Q$ is a group if and only if $S$ is cancellative.
\end{lemma}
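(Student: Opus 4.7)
The plan is to prove both implications by pushing cancellation back and forth across the canonical embedding $\iota:S\to Q$, using the structural facts about $Q$ already established earlier in the section (namely that $\iota$ is an injective monoid homomorphism, that every $(s,c)\rho\in Q$ can be written as $\iota(s)\iota(c)^{-1}$, and that $U=\{(c,d)\rho\mid c,d\in C\}$ is the group of units of $Q$).

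For the forward direction, I will assume $Q$ is a group and show $S$ is cancellative. If $sa=sb$ in $S$, then applying $\iota$ gives $\iota(s)\iota(a)=\iota(s)\iota(b)$ in $Q$; since $Q$ is a group, $\iota(s)$ is cancellative, so $\iota(a)=\iota(b)$, and injectivity of $\iota$ yields $a=b$. The argument for right cancellation is identical. Hence $S$ is cancellative.

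For the reverse direction, suppose $S$ is cancellative. Then the submonoid of cancellative elements satisfies $C=S$, and so every element $(s,c)\rho\in Q$ has both coordinates in $C$. By the earlier observation that $U=\{(c,d)\rho\mid c,d\in C\}$ is the group of units of $Q$, we obtain $Q=U$, so $Q$ is a group.

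There is no real obstacle here; the work has already been done in constructing $Q$ and identifying its group of units, so both implications reduce to one-line observations. The only point that needs a moment of care is ensuring that the embedding $\iota$ transports cancellation in $Q$ back to cancellation in $S$, which is immediate from the injectivity of $\iota$ together with the fact that every element of a group is cancellative.
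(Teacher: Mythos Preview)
Your proof is correct and is exactly the argument the paper has in mind: the paper itself gives no proof, calling the lemma ``fairly obvious,'' and your two implications use precisely the facts set up in the preceding paragraphs (the monomorphism $\iota:S\to Q$ and the identification of $U=\{(c,d)\rho\mid c,d\in C\}$ as the group of units).
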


\begin{theorem}
If $S$ is a cancellative CRM monoid, then every $S-$act has a $\WT-$cover.
\end{theorem}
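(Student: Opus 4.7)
The plan is to reduce this directly to the torsion free cover theorem of Section~5.

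First I would observe that when $S$ is a cancellative monoid, every element of $S$ is simultaneously right cancellative and (two-sided) cancellative; in other words, the set of right cancellative elements of $S$ and the set of cancellative elements of $S$ both equal $S$ itself. Consequently the defining conditions for membership in $\T$ and in $\WT$ (namely that $xc=yc$ forces $x=y$) quantify over exactly the same collection of multipliers $c\in S$. This yields the class equality $\T=\WT$ for cancellative $S$.

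Second, since $S$ is cancellative it is in particular right cancellative, so the torsion free cover theorem of Section~5 applies to $S$ and every $S$-act has a $\T$-cover. Combining this with $\T=\WT$ immediately produces a $\WT$-cover for every $S$-act, which is the desired conclusion.

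Let me flag that the CRM hypothesis is not strictly necessary for this particular argument; it is included because the theorem belongs thematically to the section on monoids of right quotients, where (by the lemma immediately preceding it) ``cancellative CRM'' is precisely the setting in which $Q$ is a group and the construction $X\mapsto Q_X$ becomes especially well behaved. An alternative but more elaborate route, through $Q_X$ and the classification results for $Q$, is available but unnecessary. There is no substantive obstacle in the proof, the entire content being the observation that cancellativity collapses the distinction between the two torsion-freeness conditions.
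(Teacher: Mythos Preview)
Your argument is correct: for a cancellative monoid the right cancellative elements and the two-sided cancellative elements both coincide with all of $S$, so the classes $\T$ and $\WT$ are literally equal, and the Section~5 result (every act over a right cancellative monoid has a $\T$-cover) finishes the proof immediately. You are also right that the CRM hypothesis plays no role in this argument.

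The paper, however, proves the result quite differently, deliberately exercising the machinery developed in Section~6. It first notes that $\WT$ is closed under directed colimits, then reduces (via Lemma~\ref{t-precover-subact-lemma} and Lemma~\ref{torsion-free-injective-lemma}) the existence of a $\WT$-precover of $A$ to the existence of an $\X$-precover of the injective envelope $E(A)$, where $\X=\WT\cap\D$. For any indecomposable $X\in\X$, Lemmas~\ref{qx-indecomposable-lemma} and~\ref{qxcongx-lemma} give $X\cong Q_X$, an indecomposable $Q$-act; since $S$ is cancellative CRM, $Q$ is a group, so $Q_X$ is cyclic and hence bounded in cardinality. Proposition~\ref{lambda-skeleton-proposition} then supplies the precover, and Theorem~\ref{precover-cover-theorem} upgrades it to a cover. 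Thus the paper's route genuinely uses the CRM hypothesis and the act-of-quotients construction; its value is not a shorter proof of this particular theorem but a demonstration that the $Q_X$ framework reproduces Enochs' integral-domain strategy and could in principle apply whenever indecomposable $Q$-acts are bounded in size (the question raised after Example~\ref{large-torsion-free-example}). Your approach is much more economical for the stated theorem but bypasses all of that.
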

\begin{proof}
First, by a straightforward modification of the proof of Lemma~\ref{tf-direct-limit} we can deduce that $\WT$ is closed under directed colimits.
If $A$ is a right $S-$act, then to show that $A$ has a $\WT-$precover, it suffices by Lemma~\ref{t-precover-subact-lemma} and Lemma~\ref{torsion-free-injective-lemma} to show that $E(A)$ has an $\X-$precover where $\X=\WT\cap\D$. Let $X$ be an indecomposable, weakly torsion free and divisible $S-$act. By Lemma~\ref{qx-indecomposable-lemma} and Lemma~\ref{qxcongx-lemma} $X=Q_{X}$ is an indecomposable $Q-$act and since $Q$ is a group is therefore cyclic and so bounded in size. Hence from Proposition~\ref{lambda-skeleton-proposition} $A$ has an $\X-$precover and therefore a $\WT-$precover and so a $\WT-$cover by Theorem~\ref{precover-cover-theorem}.
\end{proof}

\smallskip
This is very similar to the approach taken by Enochs in~\cite{enochs-63} where he considers an integral domain $R$ with field of fractions $K$. It would be of interest  to determine whether there are any other CRM monoids such that the monoid of quotients $Q$ has indecomposable weakly torsion free (and divisible) acts of bounded size. That this is not always the case follows from the following example.

\medskip

\begin{example}\label{large-torsion-free-example}
\rm Let $S=\{1,0\}$ be the trivial group with a zero adjoined. Since $S$ is commutative then it is a CRM monoid. Given any set $X$, choose and fix $y \in X$ and define an $S-$action on $X$ by $x\cdot1=x$ and $x\cdot0=y$. Given any $x,x' \in X$, $x\cdot0=x'\cdot0$ and so it is easy to see that $X$ is an indecomposable $S-$act. Notice that $xc=x$ for all right cancellable elements $c \in S$ and therefore $X$ is torsion free and so weakly torsion free. It's not too hard to see that the only cyclic $S-$acts are the one element $S-$act $\Theta_S$ and $S$ itself. Therefore since $y$ is a fixed point in $X$, by \cite[Theorem III.1.8]{kilp-00}, to show $X$ is an injective $S-$act it suffices to show that any $S-$map $f:\Theta_S \to X$ extends to $S$. This is straightforward as the image of $f$ is a fixed point. We can therefore construct arbitrarily large indecomposable (weakly) torsion free injective $S-$acts over CRM monoids.
\end{example}

\section{Injective covers}

In 1981 Enochs proved that every module over a ring has an injective cover if and only if the ring is Noetherian \cite[Theorem 2.1]{enochs-81}. The situation for acts is not so straightforward. In particular if $R$ is a Noetherian ring then there exists a cardinal $\aleph$ such that every injective module is the direct sum of indecomposable injective modules of cardinality less than $\aleph$. We give an example later to show that this is not so for monoids.

It is worth noting that by \cite[Lemma III.1.7]{kilp-00} every injective $S-$act has a fixed point and that if an $S-$act $A$ has an $I-$precover then there exists $C\in\I$ such that $\hbox{\rm Hom}(C,A)\ne\emptyset$.
Recall the following result

\begin{proposition}[{\cite[Proposition III.1.13]{kilp-00}}] \label{injective-coproducts}
Let $S$ be a monoid. All coproducts of injective $S-$acts are injective if and only if $S$ is left reversible.
\end{proposition}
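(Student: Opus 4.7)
The plan is to prove the equivalence in the usual two directions: forward by a direct counterexample using the one-element act, and reverse by Zorn's lemma combined with the injectivity of each summand, with left reversibility serving to pin down which summand each new element should map into.

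\textbf{Forward direction.} Suppose $S$ is not left reversible, so there exist $s,t \in S$ with $sS \cap tS = \emptyset$. The one-element act $\Theta$ is injective: any $S$-map into it is constant, and constant maps from any act are trivially extendable $S$-maps. Form $Q = \Theta_1 \dot\cup \Theta_2$ with fixed points $\ast_1,\ast_2$, and define $f : sS \cup tS \to Q$ by $f(sr) = \ast_1$ and $f(tr) = \ast_2$. Disjointness makes $f$ well-defined and the fixed-point targets make it an $S$-map. Any extension $\bar f : S \to Q$ must send $1$ into one component, forcing all of $S$ into that component and contradicting the definition of $f$ on the other half.

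\textbf{Reverse direction.} Assume $S$ is left reversible, let $Q = \dot\bigcup_{i \in I}Q_i$ be a coproduct of injective $S$-acts, and consider an inclusion $B \subseteq A$ with an $S$-map $f:B \to Q$. Apply Zorn's lemma to the poset of partial extensions $(B',f')$ with $B\subseteq B'\subseteq A$ a subact and $f'|_B=f$, ordered by extension; chains admit unions as upper bounds. Let $(B_0,f_0)$ be maximal, and for contradiction pick $a \in A \setminus B_0$. Set $T=\{r \in S : ar \in B_0\}$, which is a right ideal of $S$ (possibly empty, in which case we extend by sending $a$ to any fixed point of $Q$). The key technical step is to show that $f_0$ carries $aT$ into a single component $Q_{i_0}$: fix $r_0 \in T$ with $f_0(ar_0)\in Q_{i_0}$; for any $r \in T$, left reversibility produces $u,v\in S$ with $r_0 u = rv$, so
$$
f_0(ar_0)\cdot u = f_0(ar_0 u) = f_0(arv) = f_0(ar)\cdot v,
$$
and since the components of $Q$ are pairwise disjoint subacts, $f_0(ar_0)\cdot u \in Q_{i_0}$ forces $f_0(ar)\in Q_{i_0}$. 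Injectivity of $Q_{i_0}$ then extends $f_0|_{aT} : aT \to Q_{i_0}$ (a map of subacts of $aS$) to $\tilde f : aS \to Q_{i_0}$, and patching $f_0$ with $\tilde f$ yields a well-defined $S$-map on $B_0 \cup aS$ (agreement on $B_0\cap aS=aT$ is immediate since $\tilde f$ extends $f_0|_{aT}$), strictly enlarging $B_0$ and contradicting maximality.

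The principal obstacle is precisely this component-pinning step: without left reversibility, different elements of $T$ could force $f_0$ into incompatible components $Q_i\neq Q_j$, and then no single $Q_{i_0}$ can absorb the extension. The forward counterexample makes this failure explicit by splitting the images of $s$ and $t$ between $\Theta_1$ and $\Theta_2$, showing that left reversibility is exactly the obstruction that must vanish.
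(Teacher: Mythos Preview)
The paper does not give its own proof of this proposition; it is quoted verbatim from \cite[Proposition III.1.13]{kilp-00} and used as a black box. Your argument is correct and is essentially the standard proof found in that reference: the forward direction is handled by taking two copies of the one-element act $\Theta$ (always injective) and exhibiting a non-extendable map from the disjoint union $sS\cup tS$ when $sS\cap tS=\emptyset$, while the reverse direction proceeds by a Zorn's-lemma maximal-extension argument in which left reversibility is used precisely to force $f_0(aT)$ into a single summand $Q_{i_0}$, after which injectivity of that summand provides the needed extension to $aS$. One small point worth making explicit in your write-up is that $B_0\cap aS = aT$ (you state this but do not justify it): if $x\in B_0\cap aS$ then $x=ar$ for some $r\in S$, and $ar\in B_0$ gives $r\in T$ by definition; the reverse inclusion is immediate. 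With that, the patching is clean.
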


We have the following necessary conditions on $S$ so that all $S-$acts have an $\I-$precover.

\begin{lemma}
Let $S$ be a monoid. If every $S$-act has an $\I-$precover then
\begin{enumerate}
\item $S$ is a left reversible monoid.
\item $S$ has a left zero.
\end{enumerate}
\end{lemma}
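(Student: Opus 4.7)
The two conclusions draw on one common tool: the hypothesis supplies an $\I$-precover of any chosen $S$-act, and the paper has already recorded that every injective $S$-act has a fixed point (\cite[Lemma III.1.7]{kilp-00}). I would prove (2) directly from a precover of $S$, and handle (1) by reducing to Proposition~\ref{injective-coproducts}.

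For (2), apply the hypothesis to $S$ itself to obtain an $\I$-precover $g : Q \to S$. Since $Q \in \I$, it admits a fixed point $q$, meaning $qs = q$ for every $s \in S$. Then $g(q) s = g(qs) = g(q)$ for every $s$, so $g(q)$ is a left zero of $S$.

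For (1), it suffices by Proposition~\ref{injective-coproducts} to show that every coproduct of injective $S$-acts is injective. Given a family $\{Q_i\}_{i \in I} \subseteq \I$, form $Q = \dot\bigcup_{i \in I} Q_i$ with coproduct injections $\iota_i : Q_i \to Q$, and take an $\I$-precover $g : P \to Q$. Since each $Q_i$ is injective, the precover property yields an $S$-map $f_i : Q_i \to P$ with $g f_i = \iota_i$; the universal property of the coproduct then provides a unique $S$-map $f : Q \to P$ with $f \iota_i = f_i$ for every $i$. Since both $g f$ and $1_Q$ satisfy $h \iota_i = \iota_i$ for all $i$, the uniqueness clause forces $g f = 1_Q$. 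Hence $Q$ is a retract of the injective act $P$, and so is itself injective.

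I do not anticipate any real obstacle here; the key insight for (1) is simply that pairing the precover factorisation with the universal property of the coproduct manufactures a section of $g$, turning $Q$ into a retract of an injective. The only auxiliary fact needed is that retracts of injective acts are injective, which is a routine diagram-chase: pull any extension problem for $Q$ back into $P$ along the section, solve it using injectivity of $P$, and push the solution forward via $g$.
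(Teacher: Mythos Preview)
Your proof is correct and follows essentially the same route as the paper: for (2) the paper also takes an $\I$-precover of $S$ and pushes a fixed point forward, and for (1) the paper likewise lifts each inclusion $\iota_j$ through the precover and assembles the lifts into a section via the coproduct, concluding that the coproduct is a retract of an injective (citing \cite[Proposition I.7.30]{kilp-00} for the retract-of-injective step) and invoking Proposition~\ref{injective-coproducts}. The only cosmetic difference is that you phrase the assembly of the $f_i$ via the categorical universal property while the paper writes $f|_{Q_j}=f_j$ directly.
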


\begin{proof}\ 

\begin{enumerate}
\item Let $A_i$, $i \in I$ be any collection of injective $S$-acts, $B=\dot\bigcup_{i \in I}A_i$ their coproduct, and $g:C \rightarrow B$ the $\I$-precover of $B$. For each $j \in I$ and inclusion $h_j:A_j \rightarrow B$ there exists an $S$-map $f_j:A_j \rightarrow C$ such that $gf_j=h_j$. Hence we can define an $S$-map $f:B \rightarrow A$ by $f|_{A_j} = f_j$ so that $gf=id_B$ and $B$ is a retract of $C$. Therefore by \cite[Proposition I.7.30]{kilp-00}, $B$ is an injective $S$-act and by Proposition \ref{injective-coproducts}, $S$ is left reversible.
\item Let $g:I \to S$ be an $\I-$precover of $S$. Since $I$ is injective it has a fixed point $z$ and so $g(z)$ is a left zero in $S$.
\end{enumerate}
\end{proof}

\begin{remark}
\rm In particular if every $S-$act has an $\I-$precover then there is a left zero $z \in S$ such that for all $s \in S$ there exists $t \in S$ with $st=z$. Obviously both conditions above are satisfied if $S$ contains a zero.

Notice also that if $S$ contains a left zero $z$ then every $S-$act contains a fixed point since if $A$ is a right $S-$act and $a\in A$ then $(az)s=az$ for all $s\in S$. Consequently $\hbox{\rm Hom}(C,A)\ne\emptyset$ for all right $S-$acts $A$ and $C$.
\end{remark}

\begin{lemma} \label{injective-coproducts-decompose}
Let $S$ be a left reversible monoid with a left zero. Then $\dot \bigcup_{i \in I}A_i \in \I$ if and only if $A_i \in \I$ for each $i \in I$.
\end{lemma}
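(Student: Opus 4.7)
The plan is to handle the two directions separately, using the existing Proposition \ref{injective-coproducts} for the forward implication and a retraction argument for the converse.

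First, if each $A_i$ is injective, then since $S$ is left reversible, Proposition \ref{injective-coproducts} immediately gives that $\dot\bigcup_{i \in I} A_i$ is injective. So this direction requires no extra work.

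For the converse, suppose $B = \dot\bigcup_{i \in I} A_i$ is injective and fix some $j \in I$. The strategy is to exhibit $A_j$ as a retract of $B$ and invoke \cite[Proposition I.7.30]{kilp-00}. To build the retraction, I use the left zero hypothesis: as noted in the preceding remark, because $S$ has a left zero $z$, every $S$-act contains a fixed point (namely $a \cdot z$ for any element $a$). Pick a fixed point $z_j \in A_j$ and define $r: B \to A_j$ by $r(x) = x$ if $x \in A_j$ and $r(x) = z_j$ otherwise. The routine check that $r$ is an $S$-map uses only that $z_j$ is a fixed point: for $x \in A_i$ with $i \neq j$ and any $s \in S$, one has $xs \in A_i$, so $r(xs) = z_j = z_j s = r(x) s$; for $x \in A_j$ the equality $r(xs) = xs = r(x)s$ is immediate. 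With $\iota_j : A_j \to B$ the coproduct injection, clearly $r \iota_j = \mathrm{id}_{A_j}$, so $A_j$ is a retract of the injective act $B$ and hence injective.

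The only potentially delicate point is ensuring that the retraction is a well-defined $S$-map, and this is precisely where the left zero assumption is used; without a fixed point in $A_j$ there is no natural way to collapse the other summands $S$-equivariantly. No further obstacle arises.
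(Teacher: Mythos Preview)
Your proof is correct. The forward direction matches the paper exactly. For the converse, the paper takes a slightly different route: rather than building a single retraction $r:B\to A_j$, it verifies injectivity of $A_j$ directly from the definition. Given a monomorphism $\iota:X\to Y$ and $f:X\to A_j$, it first extends $f$ to $\bar f:Y\to A=\dot\bigcup_i A_i$ using injectivity of $A$, then post-composes with the collapse-to-$z_j$ map on the part of $Y$ that $\bar f$ sends outside $A_j$. Your argument packages this same idea more cleanly: you build the retraction $r:B\to A_j$ once, observe $r\iota_j=\mathrm{id}_{A_j}$, and invoke \cite[Proposition I.7.30]{kilp-00} that retracts of injectives are injective. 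Both arguments hinge on exactly the same use of the left-zero hypothesis (to obtain the fixed point $z_j$ that makes the collapse map $S$-equivariant); yours is simply more economical, while the paper's version has the minor advantage of not needing to cite the retract-of-injective fact.
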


\begin{proof}
Since $S$ is left reversible if each $A_i$ are injective then $\dot \bigcup_{i \in I}A_i$ is injective by Proposition \ref{injective-coproducts}. Conversely, assume $A=\dot \bigcup_{i \in I}A_i$ is injective, and first notice that since $S$ has a left zero each $A_i$ has a fixed point say $z_i \in A_i$. Given any $j\in I$ and monomorphism $\iota :X \to Y$ and any homomorphism $f:X \to A_j$, clearly $f \in $ Hom$(X,A)$ and so there exists $\bar{f}:Y \to A$ such that $\bar{f}|_X=f$. Now let $K_j=\{y \in Y : \bar{f}(y) \in A_j\}$ and notice that $X\subseteq K_j$ and that $y\in K_j$ if and only if $ys\in K_j$ for all $s\in S$. Now define a new function $h:Y \to A_j$ by
$$
h(y)=\begin{cases}\bar{f}(y)&y\in K_j\\z_j&\text{otherwise}\\\end{cases}
$$
Since $z_j$ is a fixed point, $h$ is a well-defined $S-$map with $h|_X=f$ and so $A_j$ is injective.
\end{proof}

\medskip

In order to apply Theorem~\ref{precover-cover-theorem} we need $\I$ to be closed under directed colimits, which in general they are not.

\bigskip

Let $S$ be a monoid and $X$ an $S-$act. We say that $X$ is {\em Noetherian} if every congruence on $X$ is finitely generated, and we say that a monoid $S$ is Noetherian if it is Noetherian as an $S-$act over itself.

\begin{lemma}[{\cite[Proposition1]{normak-77}}]\label{noetherian-proposition}
Let $S$ be a monoid and $X$ an $S-$act. Then $X$ is Noetherian if and only if $X$ satisfies the ascending chain condition on congruences on $X$.
\end{lemma}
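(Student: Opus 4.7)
The plan is to prove both implications by the standard argument familiar from ring/module theory, suitably adapted to congruences on an $S$-act.

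For the forward direction, suppose every congruence on $X$ is finitely generated and let $\rho_1\subseteq\rho_2\subseteq\cdots$ be an ascending chain of congruences. Form the union $\rho=\bigcup_{i\ge 1}\rho_i$. The first step is to check that $\rho$ is itself a congruence on $X$: reflexivity and symmetry are inherited term-by-term; compatibility with the $S$-action is inherited term-by-term; transitivity uses the fact that any two pairs $(a,b),(b,c)\in\rho$ lie in some common $\rho_i$ (take the larger of their indices), so $(a,c)\in\rho_i\subseteq\rho$. By hypothesis $\rho$ is finitely generated, say by pairs $(a_1,b_1),\ldots,(a_n,b_n)$. Each $(a_k,b_k)$ lies in some $\rho_{i_k}$; setting $N=\max_k i_k$, all generators lie in $\rho_N$ and therefore $\rho\subseteq\rho_N$. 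Thus $\rho_N=\rho_{N+1}=\cdots=\rho$ and the chain stabilises.

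For the converse I would argue contrapositively: assume some congruence $\rho$ on $X$ fails to be finitely generated, and produce a strictly ascending chain violating ACC. Pick any $(a_1,b_1)\in\rho$ and let $\rho_1$ be the congruence on $X$ generated by $\{(a_1,b_1)\}$. Since $\rho_1$ is finitely generated and $\rho$ is not, we have $\rho_1\subsetneq\rho$, so we can pick $(a_2,b_2)\in\rho\setminus\rho_1$ and let $\rho_2$ be the congruence generated by $\{(a_1,b_1),(a_2,b_2)\}$. Iterating, at stage $n$ we have a finitely generated $\rho_n\subsetneq\rho$ and can choose $(a_{n+1},b_{n+1})\in\rho\setminus\rho_n$, defining $\rho_{n+1}$ as the congruence generated by $(a_1,b_1),\ldots,(a_{n+1},b_{n+1})$. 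This yields a strictly ascending chain $\rho_1\subsetneq\rho_2\subsetneq\cdots$ of congruences on $X$, contradicting ACC.

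No step here looks genuinely hard; the only point that warrants a sentence of care is the verification that a directed union of congruences is again a congruence (needed in the first direction), since that is the place where the chain hypothesis is actually used. The second direction is essentially formal once one has the freedom to enlarge the finite generating set one pair at a time.
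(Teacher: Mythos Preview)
Your argument is correct and is exactly the standard proof of this equivalence. Note, however, that the paper does not actually supply its own proof of this lemma: it is stated as a citation of \cite[Proposition~1]{normak-77} and used as a black box, so there is nothing in the paper to compare against. Your write-up would serve perfectly well as the omitted proof; the only minor comment is that in the forward direction you might also remark that the finitely generated congruence generated by $(a_1,b_1),\ldots,(a_n,b_n)$ is contained in $\rho_N$ because $\rho_N$ is a congruence containing the generators, hence contains the congruence they generate --- but this is entirely routine.
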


\begin{lemma}
Let $S$ be a monoid and $X$ a Noetherian $S-$act. Then $X$ is finitely generated.
\end{lemma}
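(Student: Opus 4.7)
The plan is to argue by contrapositive: if $X$ is not finitely generated, then we can build a strictly ascending chain of congruences on $X$, which contradicts the ACC characterization of Noetherian acts given in Lemma~\ref{noetherian-proposition}.

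So assume $X$ is not finitely generated. Then for every finite subset $F \subseteq X$ the subact $\langle F\rangle$ generated by $F$ is a proper subact of $X$. Using this, I would recursively pick elements $x_1, x_2, \ldots \in X$ together with subacts $Y_n = \langle x_1, \ldots, x_n\rangle$: having chosen $x_1, \ldots, x_n$, since $Y_n \subsetneq X$ we may pick $x_{n+1} \in X \setminus Y_n$. This produces a strictly ascending chain $Y_1 \subsetneq Y_2 \subsetneq \cdots$ of subacts of $X$.

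Next I would turn this chain of subacts into a chain of congruences by the standard Rees-type construction: for each $n$ define
$$
\rho_n = \{(a,b) \in X \times X : a = b \text{ or } a,b \in Y_n\}.
$$
Each $\rho_n$ is an equivalence, and it is an $S$-congruence because $Y_n$ is a subact, so $a,b \in Y_n$ forces $as, bs \in Y_n$ for every $s \in S$. From $Y_n \subseteq Y_{n+1}$ we get $\rho_n \subseteq \rho_{n+1}$, and the inclusion is strict: pick $y \in Y_n$ and $y' \in Y_{n+1}\setminus Y_n$; then $(y,y') \in \rho_{n+1} \setminus \rho_n$.

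This gives a strictly ascending chain $\rho_1 \subsetneq \rho_2 \subsetneq \cdots$ of congruences on $X$, which by Lemma~\ref{noetherian-proposition} contradicts the assumption that $X$ is Noetherian. Hence $X$ must be finitely generated. The only real content is verifying that the Rees-type relations are genuine $S$-congruences, which is routine once it is noted that $Y_n$ is closed under the $S$-action; no obstacle of substance is expected.
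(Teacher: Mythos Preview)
Your proof is correct and follows essentially the same route as the paper: assume $X$ is not finitely generated, build a strictly ascending chain of finitely generated subacts $Y_n$, convert these to Rees congruences $\rho_n = (Y_n\times Y_n)\cup 1_X$, and contradict the ACC characterisation of Noetherian from Lemma~\ref{noetherian-proposition}. Your presentation is in fact slightly more careful than the paper's in spelling out the recursive choice of the $x_n$ and the verification that the $\rho_n$ are genuine congruences with strict inclusions.
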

\begin{proof}
Suppose by way of contradiction that $x_1, x_2, \ldots$ is an infinite set of generators for $X$ such that for $i\ge 2$, there exists $s_i\in S$ with $x_is_i\notin x_{i-1}S$. Let $X_i = \bigcup_{j\le i}{x_jS}$ and define the right $S-$congruence $\rho_i = X_i\times X_i\cup 1_X$ and note that
$$
\rho_1\subsetneq\rho_2\subsetneq\ldots
$$
This contradicts the ascending chain condition as required.
\end{proof}

\begin{lemma}[{\cite[Proposition2, Proposition 3, Theorem 3]{normak-77}}]\label{noetherian-lemma}
Let $S$ be a monoid.
\begin{enumerate}
\item Every subact and every homomorphic image of a Noetherian $S-$act is Noetherian.
\item All finitely generated $S-$acts over a Noetherian monoid are Noetherian and finitely presented.
\end{enumerate}
\end{lemma}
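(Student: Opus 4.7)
The plan is to handle the two parts separately, with (1) being routine and (2) reducing to a key coproduct lemma that is the real technical heart of the result.

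For the subact portion of (1), given an $S-$subact $Y$ of a Noetherian act $X$ and an ascending chain $\sigma_1 \subseteq \sigma_2 \subseteq \cdots$ of congruences on $Y$, I would extend each $\sigma_n$ to $\widetilde{\sigma_n} = \sigma_n \cup 1_X$ on $X$. This is a well-defined congruence on $X$ precisely because $Y$ is closed under the $S-$action, so pairs of $S$-translates of elements of $Y$ remain inside $Y$. The extended chain ascends in $X$, so by Lemma~\ref{noetherian-proposition} and Noetherianity of $X$ it stabilizes, which forces $\sigma_n$ to stabilize. For the homomorphic-image case, given a surjection $f:X \to Z$ and an ascending chain $\tau_1 \subseteq \tau_2 \subseteq \cdots$ on $Z$, I would pull back along $f \times f$ to obtain an ascending chain of congruences on $X$ (each containing $\ker f$); this chain stabilizes, and surjectivity of $f$ transfers the stabilization back to the $\tau_n$.

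For (2), given $S$ Noetherian and a finitely generated $S-$act $X = x_1 S \cup \cdots \cup x_n S$, the natural $S-$epimorphism $\pi : \dot\bigcup_{i=1}^n S \to X$ sending $s$ in the $i$-th copy to $x_i s$ reduces the problem to showing that the finite coproduct $\dot\bigcup_{i=1}^n S$ is Noetherian. Once this is in place, $X$ is Noetherian by part (1), and moreover $\ker(\pi)$ is finitely generated as a congruence by the definition of Noetherian applied to $\dot\bigcup_{i=1}^n S$, which yields the finite presentation of $X$ by the $n$ generators and finitely many relations.

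The main obstacle is therefore the coproduct step, namely that $A \dot\bigcup B$ is Noetherian whenever $A$ and $B$ are; an induction on $n$ then finishes off $\dot\bigcup_{i=1}^n S$. Given an ascending chain $\rho_1 \subseteq \rho_2 \subseteq \cdots$ on $A \dot\bigcup B$, the restricted chains $\alpha_n = \rho_n \cap (A \times A)$ and $\beta_n = \rho_n \cap (B \times B)$ stabilize at some stage $N$ by Noetherianity of $A$ and $B$ together with Lemma~\ref{noetherian-proposition}. After $N$, only the cross pairs $\rho_n \cap ((A \times B) \cup (B \times A))$ can change; compatibility with the $S-$action, together with transitivity through the now-fixed $A$- and $B$-parts, organizes these cross pairs into an $S-$closed relation between the quotients $A/\alpha_N$ and $B/\beta_N$, which are finitely generated by the lemma immediately preceding. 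A careful combinatorial argument using this finite generation — essentially the substantive content of Normak's Theorem~3 cited in the statement — forces the cross pairs to stabilize as well, closing the induction.
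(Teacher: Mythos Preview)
The paper does not give its own proof of this lemma; it is stated with a citation to Normak~\cite{normak-77} and used as a black box. So there is no in-paper argument to compare against, only your proposal to assess on its own terms.

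Your outline is correct. Part~(1) works exactly as you describe: $\sigma_n \cup 1_X$ is a congruence on $X$ because $Y$ is $S$-closed (transitivity is the only nontrivial check and it is immediate), and pulling back congruences along a surjection is standard. The reduction in part~(2) to showing that $A \dot\bigcup B$ is Noetherian whenever $A$ and $B$ are is the right move, and the finite-presentation clause follows at once since $\ker\pi$ is then a congruence on a Noetherian act.

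The one place you defer rather than argue is the stabilisation of the cross pairs, and here the missing details are short enough to be worth recording. Once $\alpha_n=\rho_n\cap(A\times A)$ and $\beta_n=\rho_n\cap(B\times B)$ have stabilised at $\alpha,\beta$, transitivity of $\rho_n$ forces the cross relation $R_n=\rho_n\cap(A\times B)$ to descend to the graph of an $S$-isomorphism $\phi_n:C_n/\alpha\to D_n/\beta$, where $C_n\subseteq A$ and $D_n\subseteq B$ are the domain and range subacts, and $\phi_{n+1}$ extends $\phi_n$. The chains $C_n$ and $D_n$ are ascending chains of subacts of Noetherian acts, so they stabilise (via the associated Rees congruences and Lemma~\ref{noetherian-proposition}), and two bijections with the same domain and codomain, one extending the other, must coincide. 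Hence $R_n$, and therefore $\rho_n$, stabilises. No appeal beyond part~(1) and the preceding lemma is needed, so your deferral to Normak at this point is more modesty than necessity.
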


It is shown in \cite[Lemma III.1.8]{kilp-00} that an $S-$act $A$ is injective if and only if for all cyclic $S-$acts $C$ and all subacts $X\subseteq C$, any $f:X \rightarrow A$ an $S$-map can be extended to $g:C\to A$ such that $g|X=f$.

\medskip

As with modules over a ring, we have

\begin{proposition} \label{injective-direct-limit}
Let $S$ be a Noetherian monoid, then every directed colimit of injective $S$-acts is injective.
\end{proposition}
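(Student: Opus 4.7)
The plan is to invoke the cyclic-act test for injectivity noted just before the proposition: it suffices to show that every $S$-map $f: X \to A$ defined on a subact $X$ of a cyclic $S$-act $C$ extends to all of $C$. Let $(A_i,\phi_{i,j})_{i\in I}$ be the given directed system of injective acts with colimit $(A,\alpha_i)$.

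The first step is to show that $X$ is finitely presented. Since $S$ is Noetherian and $C$ is cyclic, Lemma~\ref{noetherian-lemma} gives that $C$ is Noetherian and finitely presented; the subact $X \subseteq C$ is then itself Noetherian, and in particular finitely generated, say by $x_1,\ldots,x_n$. The natural epimorphism onto $X$ from a free $S$-act $F$ of rank $n$ has a kernel congruence which is finitely generated, since $F$ is finitely generated over the Noetherian monoid $S$ and hence Noetherian. Thus $X$ admits a finite presentation.

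The second step is the standard ``compactness'' argument for finitely presented acts in a directed colimit. For each generator $x_k$ pick $y_k \in A_{i_k}$ with $\alpha_{i_k}(y_k) = f(x_k)$, and use directedness of $I$ to replace each $y_k$ by its image in a common $A_j$. I would tentatively define $g : X \to A_j$ by $g(x_k) = y_k$; well-definedness requires, for each of the finitely many defining relations $x_k s = x_l t$ of $X$, the corresponding equality $y_k s = y_l t$ in some $A_{j'}$ with $j' \ge j$. Each such equality holds eventually by applying \cite[Lemma 2.1]{bailey-12} to $\alpha_j(y_k s) = \alpha_j(y_l t)$, and since there are only finitely many relations, directedness of $I$ lets us choose a single $j''$ which works for all of them simultaneously. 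Then $g : X \to A_{j''}$ is a well-defined $S$-map with $\alpha_{j''} g = f$.

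The final step is then routine: since $A_{j''}$ is injective, $g$ extends to some $\bar g : C \to A_{j''}$, and $\alpha_{j''} \bar g : C \to A$ is the desired extension of $f$. The principal technical point is verifying that the subact $X$ is finitely presented (not merely finitely generated), which is precisely where the Noetherian hypothesis enters, via Lemma~\ref{noetherian-lemma}; the remainder is the familiar observation that finitely presented objects are compact with respect to directed colimits.
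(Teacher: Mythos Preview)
Your argument is correct and follows essentially the same overall arc as the paper's proof: both reduce to the cyclic-act criterion, both use the Noetherian hypothesis to obtain a finiteness property, and both end by factoring $f$ through some $A_K$ in the system and extending there. The execution differs, however, in where the Noetherian condition is applied.

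The paper does not work with a presentation of $X$. Instead it considers the finitely generated image $f(X)=\langle a_1,\ldots,a_n\rangle\subseteq A$, lifts the generators to a common $A_k$, and lets $B\subseteq A_k$ be the subact they generate. Since $B$ is finitely generated over a Noetherian monoid it is Noetherian, so the congruence $\ker(\alpha_k|_B)$ is finitely generated; pushing forward along $\phi_{k,K}$ for a suitable $K$ collapses these finitely many pairs and yields $D=\phi_{k,K}(B)$ with $\alpha_K|_D$ a monomorphism onto $\im(f)$. One then extends $\alpha_K^{-1}f:X\to A_K$ by injectivity of $A_K$.

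Your route is the cleaner ``compact object'' formulation: you observe (via Lemma~\ref{noetherian-lemma}) that $X$ itself is finitely presented, and then run the standard argument that $\mathrm{Hom}(X,-)$ commutes with directed colimits. This has the advantage of isolating a reusable fact (finitely presented acts are compact) and avoids the somewhat delicate passage through $\im(f)\cong D$. The paper's approach, by contrast, keeps everything inside the target system and never writes down a presentation of $X$; the Noetherian hypothesis is used on the act $B$ sitting in $A_k$ rather than on $X$ or its free cover. One small remark: you could shorten your first step by appealing to Lemma~\ref{noetherian-lemma}(2) directly, since $X$ is finitely generated over a Noetherian monoid and hence already finitely presented, without passing through the free act $F$.
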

\begin{proof}
Let $S$ be a Noetherian monoid, and $(A_i,\phi_{i,j})_{i \in I}$ a direct system of injective $S$-acts with directed index set $I$ and directed colimit $(A,\alpha_i)$. Since $A_i$ is injective it contains a fixed point and so $A$ contains a fixed point. Let $X \subseteq C$ be a subact of a cyclic $S-$act and $f:X \rightarrow A$ an $S$-map. Since $S$ is Noetherian, by Lemma~\ref{noetherian-lemma} $X$ is Noetherian and hence finitely generated (by ascending chain condition on Rees congruences). Therefore $f(X)=\langle a_1,\ldots,a_n \rangle$ is a finitely generated subact of $A$. Since $a_i$ are all elements of the colimit, there exists $m(1),\ldots,m(n) \in I$, and $a'_i \in A_{m(i)}$ such that $\alpha_{m(i)}(a'_i)=a_i$ for each $1 \leq i \leq n$. Since $I$ is directed, there exists some $k \in I$ with $k\ge m(1),\ldots,m(n)$ and such that $b_i=\phi_{m(i),k}(a'_i) \in A_k$. Let $B=\langle b_1,\ldots,b_n\rangle$ a finitely generated subact of $A_k$. By Lemma~\ref{noetherian-lemma}, $B$ is Noetherian and so every congruence on $B$ is finitely generated. In particular $\ker(\alpha_k|_B)=Z^\#$ is finitely generated, where $Z \subseteq B \times B$ is a finite set. So given any $(x,y) \in \ker(\alpha_k|_B)$, there exists $(p_1,q_1),\ldots,(p_m,q_m) \in Z$, $s_1,\ldots,s_m \in S$ such that $x=p_1s_1$, $q_1s_1=p_2s_2$, \ldots, $q_ms_m=y$. Now, since $\alpha_k(p_j)=\alpha_k(q_j)$, for all $1 \leq j \leq m$, by \cite[Lemma 2.1]{bailey-12}, there exists $l(j) \geq k$ such that $\phi_{k,{l(j)}}(p_j)=\phi_{k,{l(j)}}(q_j)$. Since $I$ is directed, we can take some $K \in I$ larger than all of the $l(j)$ and we have $\phi_{k,K}(p_j)=\phi_{k,K}(q_j)$ for all $1 \leq j \leq m$. Hence $\phi_{k,K}(x)=\phi_{k,K}(p_1)s_1=\phi_{k,K}(q_1)s_1=\ldots=\phi_{k,K}(q_n)s_n=\phi_{k,K}(y)$ and so $\ker(\alpha_k|_B) \subseteq \ker(\phi_{k,K})$. Hence $D=\phi_{k,K}(B)$ is a finitely generated subact of $A_K$ and $\alpha_K|_D$ is a monomorphism. Also, for $1\le i\le n$, $\alpha_K(\phi_{k,K}(b_is)) = \alpha_k(b_is) =\alpha_{m(i)}(a_i's)=a_is\in\im(f) $. Conversely given any $a_is \in \im(f)$, $a_is=\alpha_{m(i)}(a'_i)s=\alpha_K(\phi_{{m(i)},K}(a'_i))s\in\im(\alpha|_D)$ and so $\im(f)=\im(\alpha_K|_D)\cong D$. Since $A_K$ is injective, $\alpha_K^{-1}f$ can be extended to $C$ with some $S$-map $g:C \rightarrow A_K$, and so $f$ can be extended to $C$ with the $S$-map $\alpha_Kg$.
\end{proof}

\begin{theorem}\label{set-injectives--theorem}
Let $S$ be a left reversible Noetherian monoid with a left zero.  If there is a cardinal $\lambda$ such that every indecomposable injective $S-$act $X$ is such that $|X| \le \lambda$ then every $S-$act has an $\I-$cover.
\end{theorem}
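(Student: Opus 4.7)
The plan is to assemble the theorem from pieces already established in the paper. Specifically, I would verify the hypotheses of Proposition~\ref{lambda-skeleton-proposition} for $\X=\I$ to obtain an $\I$-precover for every $S$-act, and then invoke Theorem~\ref{precover-cover-theorem} to upgrade precovers to covers.

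First I would check the three conditions of Proposition~\ref{lambda-skeleton-proposition}. Condition (1), that $\dot\bigcup_{i\in I}A_i\in\I$ if and only if each $A_i\in\I$, is precisely Lemma~\ref{injective-coproducts-decompose}; this is where both the left reversibility and the left zero hypotheses enter. For condition (2), I would observe that the one-element $S$-act $\Theta$ is always injective (any map into a one-element act extends trivially), and that because $S$ has a left zero $z$, every $S$-act $A$ contains a fixed point: for any $a\in A$, the element $az$ satisfies $(az)s=a(zs)=az$ for all $s\in S$, so there is a well-defined $S$-map $\Theta\to A$. Condition (3) is immediate from the hypothesis: the cardinal bound $|X|\le\lambda$ on indecomposable injectives gives strict inequality $|X|<\lambda^+$ after replacing $\lambda$ by its successor.

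Having obtained an $\I$-precover for each $S$-act, I would then apply Theorem~\ref{precover-cover-theorem}. This requires $\I$ to be closed under directed colimits, which is exactly Proposition~\ref{injective-direct-limit}; this is where the Noetherian hypothesis is used. Thus every $S$-act has an $\I$-cover.

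There is no real obstacle here since each hypothesis in the theorem lines up precisely with one of the previously established closure or representation properties of $\I$: left reversibility plus a left zero gives the coproduct-closure condition, the left zero gives non-empty Hom-sets into every act via $\Theta$, the cardinal bound on indecomposable injectives supplies the solution-set condition through the $\lambda$-skeleton proposition, and Noetherianness guarantees closure under directed colimits. The proof is therefore essentially a bookkeeping argument, and the only minor point to mention explicitly is the trivial passage from $|X|\le\lambda$ to $|X|<\lambda^+$ when applying Proposition~\ref{lambda-skeleton-proposition}.
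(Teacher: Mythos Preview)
Your proposal is correct and follows essentially the same route as the paper: verify the hypotheses of Proposition~\ref{lambda-skeleton-proposition} via Lemma~\ref{injective-coproducts-decompose} and the fixed-point/$\Theta$ argument to get $\I$-precovers, then upgrade to covers via Proposition~\ref{injective-direct-limit} and Theorem~\ref{precover-cover-theorem}. The only addition you make beyond the paper is the explicit passage from $|X|\le\lambda$ to $|X|<\lambda^{+}$, which is a harmless cosmetic point.
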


\begin{proof}
By Lemma \ref{injective-coproducts-decompose} and the fact that Hom$(\Theta_S,X)\neq \emptyset$ for every $S-$act $X$ we can apply Proposition \ref{lambda-skeleton-proposition} so that every $S-$act has an $\I-$precover. By Proposition \ref{injective-direct-limit} and Theorem \ref{precover-cover-theorem} every $S-$act has an $\I-$cover.
\end{proof}

Since the monoid given in Example~\ref{large-torsion-free-example} is finite then it is clearly Noetherian. Hence it is an example of a Noetherian left reversible monoid with a left zero with arbitrarily large indecomposable injective (and torsion free) acts. Consequently, unlike in the ring case, not every monoid satisfies the conditions given in Theorem~\ref{set-injectives--theorem}.

\medskip

It is straightforward to show that the above results also hold for weakly injective acts and covers.

\section{Principally weakly injective covers}

An $S-$act is called principally weakly injective if it is injective with respect to all inclusions of principal right ideals into $S$.
Unlike injective acts principally weakly injective acts are always closed under coproducts and decompositions (\cite[Proposition III.3.4]{kilp-00}).
A straighforward modification of Theorem~\ref{set-injectives--theorem} gives us

\begin{proposition}
Let $S$ be a Noetherian monoid with a left zero. If there is a cardinal $\lambda$ such that every indecomposable principally weakly injective $S-$act $X$ is such that $|X| \le \lambda$ then every $S-$act has a $\PWI-$cover.
\end{proposition}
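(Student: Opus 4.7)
The plan is to mirror the proof of Theorem~\ref{set-injectives--theorem} almost verbatim, replacing $\I$ by $\PWI$ throughout. The two ingredients needed are (i) the existence of a $\PWI-$precover for every $S-$act via Proposition~\ref{lambda-skeleton-proposition}, and (ii) closure of $\PWI$ under directed colimits so that Theorem~\ref{precover-cover-theorem} upgrades precovers to covers.

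For (i), condition (1) of Proposition~\ref{lambda-skeleton-proposition} is furnished by~\cite[Proposition III.3.4]{kilp-00}, which (as already noted in the excerpt) says that $\PWI$ is closed under disjoint unions and decompositions. Since $S$ has a left zero $z$, every right $S-$act $A$ contains a fixed point (any $az$ will do), so $\hbox{\rm Hom}(\Theta_S,A)\ne\emptyset$; as $\Theta_S\in\PWI$, condition (2) holds. Condition (3) is exactly the cardinality bound assumed in the statement. Hence every $S-$act has a $\PWI-$precover.

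For (ii), the plan is to imitate Proposition~\ref{injective-direct-limit}. The key observation is that the test diagrams for principal weak injectivity are inclusions $aS\hookrightarrow S$, so I am in a strict subcase of the injective argument. Given a direct system $(A_i,\phi_{i,j})$ of principally weakly injective $S-$acts with colimit $(A,\alpha_i)$ and an $S-$map $f:aS\to A$, Noetherianity of $S$ forces $aS$ (and hence $f(aS)$) to be finitely generated, so the image can be pulled back through the system to a finitely generated subact $B$ of some $A_k$; Noetherianity of $B$ then lets me equalise finitely many defining pairs of $\ker(\alpha_k|_B)$ and land inside $A_K$ with $K\ge k$ such that $\alpha_K$ is injective on $D=\phi_{k,K}(B)$. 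Principal weak injectivity of $A_K$ now extends the resulting map $aS\to A_K$ to all of $S$, and composing with $\alpha_K$ gives the required extension of $f$. This shows $A\in\PWI$.

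Combining (i) and (ii) with Theorem~\ref{precover-cover-theorem} completes the proof. The only real work is (ii), but since the test objects for principal weak injectivity are cyclic subacts of the Noetherian act $S$, all of the finite-generation steps from Proposition~\ref{injective-direct-limit} transfer without change, so no new idea is needed and in particular no left-reversibility hypothesis is required.
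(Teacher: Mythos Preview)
Your proposal is correct and follows exactly the approach the paper intends: the paper's own ``proof'' is merely the sentence ``A straightforward modification of Theorem~\ref{set-injectives--theorem} gives us'', and you have supplied precisely that modification, correctly noting that left reversibility can be dropped because \cite[Proposition III.3.4]{kilp-00} gives closure of $\PWI$ under coproducts unconditionally, and that the directed-colimit argument of Proposition~\ref{injective-direct-limit} specialises without change since the test diagrams $aS\hookrightarrow S$ are already of the required cyclic form.
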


The purpose of the left zero in the previous lemma is to ensure that for all $S-$acts $A$ there exists an principally weakly injective act $X$ such that $\hbox{\rm Hom}_S(X,A)\ne\emptyset$.

\begin{proposition}[{\cite[Proposition III.3.2]{kilp-00}}]
Let $S$ be a monoid. Then $A$ is a principally weakly injective $S-$act if and only if for all $s \in S$ and all $S-$maps $f:sS \to A$ there exists $z \in A$ such that $f(x)=zx$ for all $x \in sS$.
\end{proposition}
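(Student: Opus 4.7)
The proposition is a standard characterization of principally weak injectivity, and I would approach it by unpacking the definition on both sides and exhibiting the natural bijection between $S$-maps $S\to A$ and fixed points / generalised multipliers in $A$.

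For the forward direction, the plan is to assume $A$ is principally weakly injective and take any $s\in S$ together with an $S$-map $f:sS\to A$. Applying the injectivity property to the inclusion $\iota:sS\hookrightarrow S$ yields an $S$-map $\bar f:S\to A$ with $\bar f\iota=f$. Setting $z=\bar f(1)$, I would then compute for each $x\in sS$ that
\[
f(x)=\bar f(x)=\bar f(1\cdot x)=\bar f(1)x=zx,
\]
using only the fact that $\bar f$ is an $S$-map. This gives the required $z$.

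For the converse, I would assume the multiplier condition and start from an arbitrary $S$-map $f:sS\to A$, where $s\in S$. By hypothesis there exists $z\in A$ with $f(x)=zx$ for every $x\in sS$. The natural candidate extension is $\bar f:S\to A$ defined by $\bar f(t)=zt$. I would verify that $\bar f$ is an $S$-map via $\bar f(tu)=z(tu)=(zt)u=\bar f(t)u$, and check that $\bar f|_{sS}=f$ directly from the defining equation $f(x)=zx$. This shows $A$ is injective with respect to inclusions of principal right ideals, which is exactly principal weak injectivity.

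There is no serious obstacle here: both directions are one-line verifications once the correspondence $\bar f\longleftrightarrow\bar f(1)$ between $S$-maps $S\to A$ and elements of $A$ is noticed. The only point to be slightly careful about is ensuring that the element $z$ produced in the forward direction does not depend on a particular choice of generator of $sS$ in a way that would break the argument, but since we extend over all of $S$ and read off $z=\bar f(1)$, this dependence is harmless and the formula $f(x)=zx$ holds uniformly on $sS$.
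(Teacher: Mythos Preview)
Your argument is correct and is exactly the standard proof of this characterisation. Note, however, that the paper does not supply its own proof of this proposition: it is quoted verbatim from \cite[Proposition III.3.2]{kilp-00} and used as a black box, so there is nothing in the paper to compare against. Your write-up is the expected one and would serve perfectly well as a self-contained justification; the only cosmetic point is that the final paragraph about possible dependence of $z$ on the generator is unnecessary, since $z=\bar f(1)$ is defined without reference to any generator of $sS$.
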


\begin{lemma} \label{PWI-colimits}
If $S$ is left cancellative monoid then $\PWI$ is closed under colimits.
\end{lemma}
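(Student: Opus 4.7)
The plan is to reduce the statement to Lemma~\ref{d-closed-lemma} by proving the stronger fact that when $S$ is left cancellative, the classes $\PWI$ and $\D$ actually coincide. Given this identification, closure under colimits is immediate from the divisible case.

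To establish $\PWI=\D$, I would use the characterisation of principally weakly injective acts stated in the proposition just above. First I would show $\PWI\subseteq\D$: if $A\in\PWI$, $a\in A$, and $s\in S$, then because $S$ is left cancellative the assignment $f(st):=at$ is well-defined on $sS$ (since $st_1=st_2$ forces $t_1=t_2$) and is clearly an $S$-map $f:sS\to A$. Applying the proposition gives $z\in A$ with $f(x)=zx$ for all $x\in sS$; in particular $a=f(s)=zs$. Since every element of $S$ is left cancellative, this is exactly the condition for $A$ to be divisible.

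For the converse inclusion $\D\subseteq\PWI$, suppose $A\in\D$ and $f:sS\to A$ is any $S$-map. Put $a=f(s)$; since $s$ is left cancellative, divisibility of $A$ produces $z\in A$ with $a=zs$. Then for every $t\in S$,
\[
f(st)=f(s)t=at=(zs)t=z(st),
\]
so $f(x)=zx$ for all $x\in sS$, which verifies the characterisation of $\PWI$.

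Having shown $\PWI=\D$, the result follows at once from Lemma~\ref{d-closed-lemma}, which states that $\D$ is closed under colimits. There is no real obstacle here: the only subtlety is recognising that left cancellativity collapses the well-definedness issue of the map $st\mapsto at$, which is precisely what forces the $\PWI$ condition to reduce to pointwise divisibility by each $s\in S$.
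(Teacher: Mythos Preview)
Your argument is correct. The identification $\PWI=\D$ under left cancellativity is valid: both inclusions use exactly the observation that $st_1=st_2\Rightarrow t_1=t_2$, which makes the assignment $st\mapsto at$ a well-defined $S$-map, and this is all that is needed to pass back and forth between the characterisation of $\PWI$ and the definition of divisibility (every $s\in S$ being left cancellative).

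The paper proceeds differently. Rather than first identifying $\PWI$ with $\D$, it verifies the $\PWI$ condition directly on a colimit $(A,\alpha_i)$ of $\PWI$ acts: given $f:sS\to A$, one chooses $a_i\in A_i$ with $\alpha_i(a_i)=f(s)$, uses left cancellativity to define $h:sS\to A_i$ by $h(st)=a_it$, applies principal weak injectivity of $A_i$ to obtain $z_i\in A_i$ with $h(x)=z_ix$, and then checks $f(x)=\alpha_i(z_i)x$. Both proofs pivot on the same computation, but your version factors it through a structural statement of independent interest---namely that over a left cancellative monoid the two classes coincide---and then simply invokes Lemma~\ref{d-closed-lemma}. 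The paper's approach is more self-contained but less informative; yours explains \emph{why} the lemma holds rather than just verifying it, and incidentally shows that the subsequent theorem about $\PWI$-covers with the unique mapping property is, in this setting, really a statement about $\D$-covers.
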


\begin{proof}
Let $(A_i,\phi_{i,j})_{i \in I}$ be a direct system of $S-$acts with colimit $(A,\alpha_i)$. For all $s \in S$ and $S-$maps $f:sS \to A$, let $a_i \in A_i$ for some $i \in I$ such that $\alpha_i(a_i)=f(s)$. Define $h:sS \to A_i$ by $h(st)=a_it$ for all $st \in sS$ and notice that when $S$ is left cancellative, this is a well defined $S-$map. Hence there exist $z_i\in A_i$ such that $h(x) = z_ix$ and so there exists $\alpha_i(z_i) \in A$ such that $f(x)=\alpha_ih(x) = \alpha_i(z_i)x$ for all $x \in sS$.
\end{proof}

\begin{theorem}
Let $S$ be a principally weakly self-injective left cancellative monoid. Then every $S-$act has a $\PWI-$cover with the unique mapping property.
\end{theorem}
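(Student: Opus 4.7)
The plan is to reduce this to an application of Theorem~\ref{generator-and-colimits}, which says that if a class of $S$-acts contains a generator and is closed under colimits, then every $S$-act has an $\X$-cover with the unique mapping property. So I need to verify both hypotheses for $\X = \PWI$.

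First I would check the closure under colimits. This is exactly Lemma~\ref{PWI-colimits}, which applies because $S$ is assumed to be left cancellative (the hypothesis used in that lemma is precisely to make the assignment $h(st) = a_i t$ well-defined on the principal right ideal $sS$).

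Next I would verify that $\PWI$ contains a generator. The natural candidate is $S$ itself: the identity $1_S : S \to S$ is an epimorphism, so $S$ is a generator in the sense of \cite[Theorem II.3.16]{kilp-00}. Moreover, $S$ being principally weakly self-injective is exactly the statement that $S \in \PWI$. Hence $S$ is a generator lying in $\PWI$.

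With both hypotheses in hand, Theorem~\ref{generator-and-colimits} applies directly and yields a $\PWI$-cover with the unique mapping property for every $S$-act. There is no real obstacle here; the nontrivial work was already done in Lemma~\ref{PWI-colimits} and Theorem~\ref{generator-and-colimits}, and the two hypotheses on $S$ in the statement are tailored precisely so that these two results can be invoked.
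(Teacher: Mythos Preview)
Your proposal is correct and follows exactly the same approach as the paper: invoke Lemma~\ref{PWI-colimits} for closure under colimits, observe that principal weak self-injectivity of $S$ means $S\in\PWI$ is a generator, and then apply Theorem~\ref{generator-and-colimits}. The only difference is that you spell out explicitly why $S$ is a generator, which the paper leaves implicit.
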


\begin{proof}
If $S$ is principally weakly injective then $\PWI$ has a generator and so by Lemma \ref{PWI-colimits} and Theorem \ref{generator-and-colimits} every $S-$act has a $\PWI-$cover with the unique mapping property.
\end{proof}

It is clear, and rather trivial to note, that if $S$ is a monoid and $\X$ a class of $S-$acts such that every $S-$act belongs to $\X$ then every $S-$act has an $\X-$cover. Hence

\begin{theorem}
If $S$ is a regular monoid then every $S-$act has a $\PWI-$cover.
\end{theorem}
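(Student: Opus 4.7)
The plan is to show that if $S$ is regular, then every $S$-act is itself principally weakly injective, so the preceding remark (that every $S$-act trivially has a cover when the cover class is everything) immediately applies with the identity map as the $\PWI$-cover.

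By the proposition cited just above, it suffices to verify the following for an arbitrary $S$-act $A$: for every $s \in S$ and every $S$-map $f : sS \to A$, there exists $z \in A$ such that $f(x) = zx$ for all $x \in sS$. So let $f : sS \to A$ be given, and use regularity of $S$ to pick $s' \in S$ with $ss's = s$. Set $a = f(s)$ and let $z = as' \in A$; the claim is that this $z$ works.

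The key computation relies on a preliminary identity. Since $s = ss's$ and $f$ is an $S$-map, $a = f(s) = f(s \cdot s's) = f(s)(s's) = as's$. Given any $x \in sS$, write $x = st$ for some $t \in S$; then
\[
zx = (as')(st) = (as's)t = at = f(s)t = f(st) = f(x),
\]
as required. Hence $A \in \PWI$.

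Since this holds for every $S$-act $A$, we have $\PWI$ = the class of all $S$-acts, and the trivial observation preceding the theorem shows that every $S$-act is its own $\PWI$-cover. No part of this argument is a genuine obstacle; the only thing to watch is the bookkeeping with $ss's = s$ to derive $as's = a$, which is exactly what makes the chosen $z = as'$ recover the values of $f$.
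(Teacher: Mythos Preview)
Your argument is correct and follows exactly the approach the paper intends: the paper's proof consists solely of the observation immediately preceding the theorem, implicitly invoking the standard fact that over a regular monoid every $S$-act is principally weakly injective, and you have simply made that fact explicit via the characterisation in \cite[Proposition III.3.2]{kilp-00}. Your computation with $z = f(s)s'$ is the usual verification of this classification result, so nothing further is needed.
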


\bigskip

It is clearly of interest to determine whether every $S-$act over a left reversible Noetherian monoid with a left zero has an $\I-$cover. A knowledge of the indecomposable injective acts would help greatly in this goal.

Likewise, if $S$ is a CRM monoid with monoid of quotients $Q$ then every indecomposable, torsion free and divisible $S-$act is also an indecomposable $Q-$act. We would like to know which monoids $Q$ have indecomposable $Q-$acts of bounded size.

\end{document}